\newcommand{\du}{\mathrm{d}u}
\newcommand{\dx}{\mathrm{d}x}
\newcommand{\dy}{\mathrm{d}y}
\newcommand{\R}{\mathbb{R}}
\newcommand{\Z}{\mathbb{Z}}
\newcommand{\set}[2]{\left\{\left. #1 \, \right| \, #2\right\}}
\DeclareMathOperator{\id}{Id}
\newtheorem{cor}{Corollary}[section]
\newtheorem{theo}[cor]{Theorem}
\newtheorem{lem}[cor]{Lemma}
\theoremstyle{definition}
\newtheorem{df}[cor]{Definition}
\newtheorem{nt}[cor]{Notation}
\newtheoremstyle{example}
	{}{}
	{}
	{}
	{\bfseries}
	{.}
	{\newline}
	{}
\theoremstyle{example}
\newtheorem{ex}{Example}[section]
\title{Ideas for best teaching integrals: we are teaching wrongly and how to do it right}
\author{Bruno Mendonça Rey dos Santos}
\begin{document}

\maketitle

\begin{abstract}
	We discuss some problems with the indefinite integral notation and the way of teaching of integrals in Calculus. Based on the discussion, and in order to avoid mistakes, we propose another notation for indefinite integrals.
\end{abstract}

\section*{Introduction}

When we teach indefinite integrals, we use the notation $\int$. But it is not clear for the students what the $\int$ means. What does it means $\int f(x) \dx$? Can one write $F(x) = \int f(x) \dx$? Is $\int f(x) \dx$ a new function, or it is family of functions? Or does it depend on the context?

We use to write $\int x \dx = \frac{x^2}{2}$, but also $\int x \dx = \frac{x^2}{2} + c$, where $c \in \R$ is any constant. But our students don not know which one to use and why. So our notation is confusing.

Rather than that, we use this notations to teach our students some "mathemagical" manipulation of $\dx$, $\du$ and $\dy$. Those manipulations are useful but we do not prove them and they can lead us (or at least our students) to many mistakes. In the following lines, we will discuss some mistakes made by our students and mistakes we teach them how to make. We will also propose a (not so) new way of teaching Calculus using another notation.

This paper have two sections. In the first one we show examples of how we teach and we discuss what is wrong with each example. In the second section we propose a new notation for the indefinite integral and we solve each example already discussed using the new notation.

After I had prepared this material, I learned from one of my Calculus students that a similar notation had already appeared in a series of MIT video lectures available through the Internet: the \textit{Calculus Revisited: Single Variable Calculus}, whose instructor was Prof. Herbert Gross. For more information about this lectures see \cite{CalcRev}.

\tableofcontents

\section{How we use to teach}

In this section we show some examples of how we teach integrals to our students and we discuss what is wrong with each the examples.

\subsection{The integral $\int\frac{1}{x}\dx$}
	
	We teach our students that $\int \frac{1}{x} \dx = \ln|x| + c$, where $c \in \R$ is any constant. So lets consider the function $f \colon \R \to \R$ given by
	\[f(x) =
	\begin{cases}
		\ln(x) + 2, & \text{if} \ x > 0; \\
		\ln(-x) - \pi, & \text{if} \ x < 0.
	\end{cases}\]
	Clearly, $f'(x) = \frac{1}{x}$ and $f(x) \ne \ln|x| + c$.

	This example shows that the notation $\int$ does not take into consideration the function's domain, just its rule. When we take the function $x \mapsto \frac{1}{x}$ and do not say nothing about its domain, we are considering that the function's domain is the biggest set where the rule makes sense. In the Calculus context, it is the set $\R^* = \R\setminus \{0\}$.

	The notation $\int$ is misleading because it does not take into consideration the function's domain. But, in this particular case, the biggest problem is not the notation: we should take the function's domain into account and teach our students to do the same.

\subsection{Changing variables and some "mathemagic"}\label{magic}

	Here we give some examples of the practical way we teach our students to calculate some integrals changing the variables. Then we discuss the problems of this approach.
	
	\begin{ex}[$\int\cos \left(x^2\right) x \dx$]
		We use the following change of variables:
		\[u = x^2 \Rightarrow \frac{\du}{\dx} = 2x \Rightarrow x\dx = \frac{1}{2} \du.\]
		Then,
		\[\int \cos \left(x^2\right) x \dx = \int \cos(u)\cdot \frac{1}{2} \du = \frac{\sin u}{2} + k = \frac{\sin\left(x^2\right)}{2} + k.\]
	\end{ex}

	\begin{ex}[$\int \frac{1}{\sqrt{x^2 +1}} \dx$.]
		Lets analyze the following triangle:
		\begin{center}
			\psfrag{1}{$1$}
			\psfrag{x}{$x$}
			\psfrag{b}{$\sqrt{x^2+1}$}
			\psfrag{c}{$\theta$}
			\includegraphics{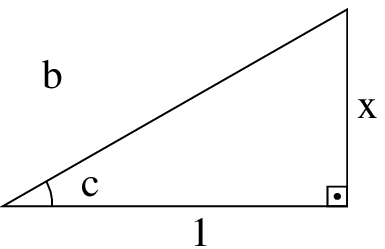}
		\end{center}
		 
		Based on the triangle above, if we call $x = \tan \theta$, we have that
		\[\frac{1}{\sqrt{x^2+1}} = \cos \theta \quad \text{and} \quad \frac{\dx}{\mathrm{d}\theta} = \sec^2\theta \Rightarrow \dx = \sec^2\theta \mathrm{d}\theta.\]
		Then,
		\[\int \frac{1}{\sqrt{x^2 +1}} \dx = \int \cos\theta \sec^2 \theta \mathrm{d}\theta = \int \sec\theta \mathrm{d}\theta.\]
		At this point, if we already know $\int \sec \theta \mathrm{d}\theta$, we can write
		\[\displaystyle \int \frac{1}{\sqrt{x^2 +1}} \dx = \ln\left| \tan \theta + \sec \theta \right| + k = \ln\left|x + \sqrt{1+x^2} \right| + k.\]
	\end{ex}

	\begin{ex}[$\int \sqrt{1-x^2} \dx$]\label{ex nao difeo}
		Lets now use the following triangle:
		\begin{center}
			\psfrag{1}{$1$}
			\psfrag{x}{$x$}
			\psfrag{b}{$\sqrt{1-x^2}$}
			\psfrag{c}{$\theta$}
			\includegraphics{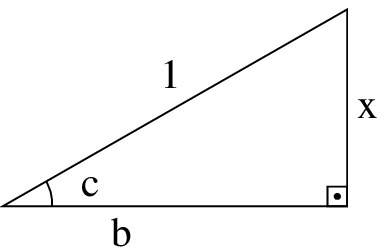}
		\end{center}
		Calling $x= \sin \theta$, we have that
		\[\sqrt{1-x^2} = \cos \theta \quad \text{and} \quad \frac{\dx}{\mathrm{d}\theta} = \cos \theta \Rightarrow \dx = \cos\theta \mathrm{d}\theta.\] Thus
		\begin{align*}
			& \int \sqrt{1-x^2} \dx = \int \cos(\theta) \cdot\cos(\theta) \mathrm{d}\theta = \int \cos^2(\theta) \mathrm{d}\theta = \\
			& = \frac{\theta}{2} + \frac{\sin(\theta)\cos(\theta)}{2} + k = \frac{\arcsin(x)}{2} + \frac{x\sqrt{1-x^2}}{2} + k.
		\end{align*}
	\end{ex}

	\begin{ex}[$\int_0^{\frac{3\pi}{4}} e^{\cos(x)}\cdot \sin(x) dx$]
		Using the change of variables $u = \cos(x)$, we have that
		\[\begin{cases}
			x = 0 \Rightarrow u = 1; \\
			x = \frac{3 \pi}{4} \Rightarrow u = -\frac{\sqrt 2}{2}; \\
			\du = -\sin(x)\dx;
		\end{cases}\]
		thus
		\[\int_0^{\frac{3\pi}{4}} e^{\cos(x)}\cdot \sin(x) \dx = \int_1^{-\frac{\sqrt 2}{2}} -e^u \du = \int_{- \frac{\sqrt 2}{2}}^1 e^u \du = \left.e^u\right|_{-\frac{\sqrt 2}{2}}^1 = e-\frac{1}{e^{\frac{\sqrt 2}{2}}}.\]
	\end{ex}

	Lets point some observations about our method of changing variable at calculating integrals:
	\begin{enumerate}
		\item The very good thing about this method is that it is fast and we can calculate integrals using just a feel lines.
		
		\item One problem about this method is that we make lots of calculations multiplying and dividing by $\dx$, $\du$ and $\mathrm{d}\theta$. But $\dx$, $\du$ and $\mathrm{d}\theta$ are (today) just symbols and we cannot add, subtract, multiply or divide by them.
		
		\item In the change of variables in the examples above, we have used four different theorems, one for each example, and each theorem has different hypothesis. Using the traditional method, we do not check the theorems hypothesis before making calculations. Our students do not even know the theorems. So, we are tanking conclusions (calculations) based on theorems whose hypothesis we have not checked. It is a dangerous thing to take conclusions without checking the theorems hypothesis, and it is even more dangerous to teach students to do the same.
	\end{enumerate}
	

%

%

\subsection{The integral $\int\frac{1}{1-\cos x + \sin x}\dx$} \label{last}
	
	The ideas to calculate the integral $\int\frac{1}{1-\cos x + \sin x}\dx$ were found in \cite{Gui}.
	
	Using the identities
	\[\cos x = \frac{1-\tan^2\frac{x}{2}}{1+\tan^2\frac{x}{2}} \quad \text{and} \quad \sin x = \frac{2\tan\frac{x}{2}}{1+\tan^2\frac{x}{2}},\]
	we can write
	\[\int \frac{1}{1-\cos x + \sin x}\dx = \int \frac{1+\tan^2\frac{x}{2}}{2\tan^2\frac{x}{2}+2\tan\frac{x}{2}}\dx.\]

	Now we can use the following change of variables:
	\[u=\tan\frac{x}{2} \Rightarrow \frac{\du}{\dx} = \frac{1}{2}\left(1+ \tan^2\frac{x}{2}\right) \Rightarrow \dx = \frac{2}{1+u^2}\du.\]
	Thus,
	\begin{align*}
		& \int \frac{1}{1-\cos x + \sin x}\dx =  \int \frac{1+u^2}{2u^2+2u}\cdot \frac{2}{1+u^2}\du = \int \frac{1}{u(u+1)}\du = \\
		&= \int \frac{1}{u} - \frac{1}{u+1}\du = \ln|u| -\ln|u+1| + k = \ln\left|\frac{u}{u+1}\right|+ k = \\
		&= \ln\left| \frac{\tan\frac{x}{2}}{1+\tan\frac{x}{2}}\right| + k.
	\end{align*}

	But,
	\begin{align*}
		& \frac{\tan\frac{x}{2}}{1+\tan\frac{x}{2}} = \frac{\sin\frac{x}{2}}{\cos\frac{x}{2}+\sin\frac{x}{2}} = \frac{\sin\frac{x}{2}\cos\frac{x}{2}}{\cos^2\frac{x}{2} + \sin\frac{x}{2}\cos\frac{x}{2}} = \\
		& = \frac{\frac{\sin x}{2}}{\frac{1+\cos x}{2}+\frac{\sin x}{2}} = \frac{\sin x}{1 + \cos x + \sin x}.
	\end{align*}
	Therefore,
	\[\int \frac{1}{1-\cos x + \sin x}\dx = \ln \left|\frac{\sin x}{1 + \cos x + \sin x}\right| + k.\]

	All these calculations are not entirely wrong, they are necessary. But lets take a look at both functions we have:
	\[x \mapsto \frac{1}{1-\cos x + \sin x} \quad \text{and} \quad x \mapsto \ln\left|\frac{\sin x}{1 + \cos x + \sin x}\right| + k.\]
	Its easy to see that the first function is defined in $x = \pi$, while the last one is not. That means that the domain of our solution is not the same domain of our original function, which is absurd.

	Using the same technique to calculate $\int \sqrt{1-\cos(x)} \dx$, we also find a primitive whose domain is smaller than the domain of the original function.

\subsection{The fundamental theorems of Calculus}

	The following two Theorems below be found in \cite{Spi}.

	\begin{theo}[First Fundamental Theorem of Calculus]\label{F1}
		Let $I$ be an interval, $x_0 \in I$ and $f \colon I \to \R$ a function. If $f$ is \textbf{continuous}, then function $F \colon I \to \R$ given by $F(x) = \int_{x_0}^x f(t)dt$ is a $C^1$ function and $F'(x) = f(x)$, for all $x \in I$.
	\end{theo}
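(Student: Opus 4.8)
The plan is to prove the First Fundamental Theorem of Calculus directly from the definition of the derivative, using continuity of $f$ at the crucial point. First I would fix an arbitrary $x \in I$ and write down the difference quotient for $F$ at $x$: for $h \ne 0$ with $x + h \in I$,
\[\frac{F(x+h) - F(x)}{h} = \frac{1}{h}\left(\int_{x_0}^{x+h} f(t)\,dt - \int_{x_0}^{x} f(t)\,dt\right) = \frac{1}{h}\int_{x}^{x+h} f(t)\,dt,\]
where the last equality uses the additivity of the integral over adjacent intervals (and the convention for reversed limits when $h < 0$). The goal is then to show this quantity tends to $f(x)$ as $h \to 0$.

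Next I would estimate the deviation of the difference quotient from $f(x)$. Writing $f(x) = \frac{1}{h}\int_x^{x+h} f(x)\,dt$ (the integrand here being the constant $f(x)$), I get
\[\left|\frac{F(x+h) - F(x)}{h} - f(x)\right| = \left|\frac{1}{h}\int_{x}^{x+h} \bigl(f(t) - f(x)\bigr)\,dt\right| \le \frac{1}{|h|}\left|\int_{x}^{x+h} |f(t) - f(x)|\,dt\right|.\]
Now I invoke continuity of $f$ at $x$: given $\varepsilon > 0$, there is $\delta > 0$ such that $|t - x| < \delta$ implies $|f(t) - f(x)| < \varepsilon$. For $0 < |h| < \delta$, every $t$ between $x$ and $x + h$ satisfies $|t - x| < \delta$, so the integrand is bounded by $\varepsilon$, and the estimate above is at most $\frac{1}{|h|} \cdot \varepsilon |h| = \varepsilon$. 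This shows $F'(x) = f(x)$ for every $x \in I$.

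Finally, since $F' = f$ and $f$ is continuous on $I$ by hypothesis, $F$ has a continuous derivative, i.e.\ $F \in C^1(I)$, which completes the proof. I would also note the one technical subtlety that needs care: at an endpoint of $I$, the limit $h \to 0$ is a one-sided limit, and the additivity/estimate steps go through verbatim with $h$ restricted to the appropriate sign. I expect the main obstacle — really the only non-bookkeeping point — to be the clean handling of the sign of $h$ in the inequality $\left|\int_x^{x+h}|f(t)-f(x)|\,dt\right| \le \varepsilon|h|$, since for $h < 0$ the integral has reversed limits; this is dispatched by writing the bound as $\left|\int_x^{x+h} \varepsilon\,dt\right| = \varepsilon|h|$ and using monotonicity of the integral on the correctly-oriented interval. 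Everything else is a direct $\varepsilon$–$\delta$ argument that relies only on basic properties of the definite integral (additivity, monotonicity, the value of the integral of a constant) which may be assumed known.
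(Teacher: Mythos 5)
Your proof is correct and is the standard argument (difference quotient, additivity of the integral, and the $\varepsilon$--$\delta$ estimate from continuity at $x$); the paper itself does not prove this theorem but simply cites Spivak, where essentially this same proof appears. Your handling of the sign of $h$ and of one-sided limits at endpoints is exactly the right care to take, so nothing is missing.
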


	\begin{theo}[Second Fundamental Theorem of Calculus]\label{F2}
		Let $f \colon [a,b] \to \R$ a function. If $f$ is \textbf{integrable} and $F \colon [a,b] \to \R$ is a function such that $F'(x) = f(x)$, for all $x \in [a,b]$, then $\int_a^b f(x)\dx = F(b) - F(a)$.
	\end{theo}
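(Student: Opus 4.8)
The plan is to avoid invoking Theorem~\ref{F1} altogether: we are only given the pointwise relation $F' = f$ on $[a,b]$, not that $F$ is $C^1$, so the first fundamental theorem does not apply. Instead I would compare the number $F(b) - F(a)$ directly with the lower and upper Darboux sums of $f$. The one tool that makes this possible without any continuity hypothesis on $f$ is the Mean Value Theorem, which converts an increment of $F$ into a sampled value of $f$.

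First I would fix an arbitrary partition $a = x_0 < x_1 < \dots < x_n = b$ of $[a,b]$. On each subinterval $[x_{i-1},x_i]$ the function $F$ is differentiable, hence continuous, so the Mean Value Theorem yields a point $t_i \in (x_{i-1},x_i)$ with $F(x_i) - F(x_{i-1}) = F'(t_i)(x_i - x_{i-1}) = f(t_i)(x_i - x_{i-1})$. Summing over $i$ and telescoping the left-hand side gives $F(b) - F(a) = \sum_{i=1}^{n} f(t_i)(x_i - x_{i-1})$, so $F(b) - F(a)$ equals a Riemann sum of $f$ for the partition $P = \{x_0,\dots,x_n\}$.

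Next I would sandwich that Riemann sum. Since $f$ is integrable it is in particular bounded, so on each subinterval the quantities $m_i = \inf f$ and $M_i = \sup f$ are finite and satisfy $m_i \le f(t_i) \le M_i$. Multiplying by $x_i - x_{i-1}$ and summing shows $L(f,P) \le F(b) - F(a) \le U(f,P)$ for \emph{every} partition $P$. Taking the supremum over $P$ of the left-hand side and the infimum over $P$ of the right-hand side, and using that $f$ is integrable so that both extrema equal $\int_a^b f(x)\dx$, forces $F(b) - F(a) = \int_a^b f(x)\dx$.

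The step I expect to require the most care is the final passage: one must observe that $F(b) - F(a)$ is a single fixed number lying between $\sup_P L(f,P)$ and $\inf_P U(f,P)$, and that these two values coincide \emph{precisely because} integrability was assumed — this is where the hypothesis is genuinely used. It is also worth flagging explicitly that the hypothesis $F' = f$ is meant on the closed interval, including the one-sided derivatives at $a$ and $b$, which is exactly what the Mean Value Theorem consumes on the first and last subintervals; no appeal to continuity of $f$ is made anywhere.
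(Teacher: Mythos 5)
Your proof is correct and is exactly the standard argument: the paper does not prove Theorem~\ref{F2} itself but defers to \cite{Spi}, and the proof given there is precisely this one --- apply the Mean Value Theorem on each subinterval of a partition to write $F(b)-F(a)$ as a Riemann sum, sandwich it between $L(f,P)$ and $U(f,P)$, and invoke integrability to force equality. Your closing remarks correctly identify where integrability and the one-sided derivatives at $a$ and $b$ are used, so there is nothing to add.
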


	If we want to calculate $\int_a^b f(x) \dx$, we first calculate $F(x) = \int f(x) \dx$. Then we can write $\int_a^b f(x) \dx = F(b) - F(a)$. The problem here is that we use the same notation to calculate $F$ and to calculate $\int_a^b f(x)\dx$. But they are completely different problems. Besides that, once we get used to this calculations, we forget that $f$ must be integrable and we start to think that if $F(x) = \int f(x)\dx$, then $f$ is integrable and $\int_a^b f(x) \dx = F(b) - F(a)$.

	In \cite{GO} we can find functions $F$ and $f$, defined in the same closed interval, and such that $F = \int f(x) \dx$ but $f$ is not integrable. See also \cite{Volt}.

\section{How we should teach}

The main idea to make things right is to teach our students to make calculations the same way we prove the theorems. The first part in order to prove theorems in Calculus, is to have good definitions

\subsection{Definitions}

	We will first use a good, but not so formal, definition of function:

	\begin{df}[Function]
		A function is an object formed by 3 parts:
		\begin{enumerate}
			\item a set $A$ called the \textbf{domain} of the function,
			\item a set $B$ called the \textbf{codomain} of the function and
			\item a rule that relates each element $x \in A$ to a unique element $y \in B$.
		\end{enumerate}
	
		We use the notation $f(x)$ to denote the only element $y \in B$ which is related to the element $x \in A$ by the rule of the function $f$, ie., $f(x)=y$.
	\end{df}

	The 3 parts together ($A$, $B$ and the rule $x \mapsto f(x)$) is called function. If we want to give a name for a function, generally we use roman letters, for example, lets consider the function whose domain is $A = [0,\infty[$, codomain is $B=\R$ and its rule relates each number $x \in [0,\infty[$ to is square root $\sqrt{x}$. We can call this function $f$. To define this function $f$, we could just write
	\[\begin{matrix}
	f : & [0,\infty[ & \longrightarrow & \R \\
	& x & \longmapsto & \sqrt{x}
	\end{matrix}\]

	If we write
	\[\begin{matrix}
		g : & A & \longrightarrow & B \\
		& x & \longmapsto & g(x)
	\end{matrix}\]
	we mean that $g$ is the name of a function, $A$ is the domain of $g$, $B$ is the codomain of $g$ and $x \mapsto g(x)$ is the rule of the function $g$.

	In Calculus, it is common to give just the rule of some function, and the function's domain and codomain are not explicitly given. For example, lets consider the function $\frac{1}{x}$. Here we mean the function whose rule is $x \mapsto \frac{1}{x}$. We have not said a word about the function's domain nor its codomain, but we are considering that its domain is the biggest subset of $\R$ where the rule makes sense and that the codomain is $\R$ itself. In other words, when we say "lets consider the function $\frac{1}{x}$", we mean the following function:
	\[\begin{matrix}
		\R^* & \longrightarrow & \R \\
		x & \longmapsto & \frac{1}{x}
	\end{matrix}\]

	\begin{nt}
		If $g$ is a function, $D_g$ denotes the domain of $g$ and $CD_g$ denotes the codomain of $g$. The image of $g$ is the set $\mathrm{Im}_g = \set{g(x)}{x \in D_g}$.
	\end{nt}

	If we say "lets consider the function $x \stackrel{h}{\mapsto} \sqrt{x^2-1}$", we mean that $h$ is the name of the function, $x \mapsto \sqrt{x^2-1}$ is its rule, $D_h = \ ]\infty, -1] \cup [1, \infty[$ and $CD_h = \R$.

	It is important to remark here that, in Calculus, we do not work with any kind of domain for our functions. We just work with functions whose domains are intervals or unions of intervals. Besides that, we are consider that an interval has infinity many elements, so the sets $]a,a[ \, = \varnothing$ and $[a,a] = \{a\}$ are not intervals to us.

	\begin{df}
		Let $A \subset \R$ be na interval or a union of intervals. In order to make things easier, lets suppose also that $A$ have to following properties:
		\begin{enumerate}
			\item If $p \in A$ is a left accumulation point of $A$, then $]p-\epsilon,p] \subset A$, for some $\epsilon > 0$.
			\item If $p \in A$ is a right accumulation point of $A$, then $[p,p+ \epsilon[ \ \subset A$, for some $\epsilon > 0$.
			\item If $p \in A$ is not a left accumulation point of $A$, then $]p-\epsilon,p] \not\subset A$, for all $\epsilon > 0$.
			\item If $p \in A$ is not a right accumulation point of $A$, then $[p,p+ \epsilon[ \ \not\subset A$, for all $\epsilon > 0$.
		\end{enumerate}
	\end{df}

	Now we will need a equivalence relation between differentiable functions.

	\begin{df}
		Let $\mathcal{F}$ be the set of all differentiable real functions, that is,
		\[\mathcal{F} = \set{f \colon A \to \R}{\text{$A$ is standard and $f$ is differentiable}}.\]
	
		If $f, g \in \mathcal{F}$, we will say that $f$ and $g$ are \textbf{equivalent} (or that $f$ is equivalent to $g$) if $f' = g'$. If $f$ and $g$ are equivalent, we will write $f \sim g$.
	
		If $f \in \mathcal{F}$, the equivalence class of $f$ will be denoted by $[f]$, that is,
		\[[f] = \set{g \in \mathcal{F}}{g \sim f}.\]
	\end{df}

	It is important to remark that, if $f$ and $g$ are differentiable, than $f'= g'$ means that $D_f = D_g$ and that $f'(x) = g'(x)$, for all $x \in D_f$.

	We need some operations between the equivalence classes of $\cal F$.

	\begin{df}
		Let $f, g \in \mathcal{F}$ be functions such that $D_f \cap D_g$ is standard and let $\alpha \in \R$. We will define $[f]+[g]$ and $\alpha [f]$
		\begin{align*}
			& [f]+[g] = \set{\varphi+\psi}{\varphi \in [f] \ \text{and} \ \psi \in [g]}; \\
			& [f]-[g] = \set{\varphi-\psi}{\varphi \in [f] \ \text{and} \ \psi \in [g]}; \\
			& \alpha [f] = \set{\alpha \varphi}{\varphi \in [f]}.
		\end{align*}
	\end{df}

	Lets remember here which are the functions $f+g$, $f-g$ and $\alpha f$.
	\begin{align*}
		& \begin{matrix}
			f+g \colon & D_f \cap D_g & \longrightarrow & \R \\
			& x & \longmapsto & f(x) + g(x)
		\end{matrix} \\
		& \begin{matrix}
			f-g \colon & D_f \cap D_g & \longrightarrow & \R \\
			& x & \longmapsto & f(x) - g(x)
		\end{matrix} \\
		& \begin{matrix}
			\alpha f \colon & D_f & \longrightarrow & \R \\
			& x & \longmapsto & \alpha \cdot f(x).
		\end{matrix}
	\end{align*}

	Its easy to show the following Lemma:

	\begin{lem}\label{operations}
		Let $f, g \in \mathcal{F}$.
		\begin{enumerate}
			\item If $\alpha \in \R$ and $\alpha \ne 0$, then $\alpha \cdot [f] = [\alpha \cdot f]$.
			\item If $D_f\cap D_g$ is standard, then $[f]+[g] = [f+g]$ and $[f-g] = [f]-[g]$.
		\end{enumerate}
	\end{lem}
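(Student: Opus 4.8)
The plan is to prove all three set equalities by double inclusion, relying only on the definitions together with two elementary facts: for differentiable $\varphi,\psi$ one has $(\alpha\varphi)'=\alpha\varphi'$ on $D_\varphi$ and $(\varphi+\psi)'=\varphi'+\psi'$ on $D_\varphi\cap D_\psi$, and the remark recorded just before the statement, that $\varphi\sim\psi$ encodes both $D_\varphi=D_\psi$ and $\varphi'(x)=\psi'(x)$ for every $x$. Throughout, "standard" domain means the property introduced for the definition of $\mathcal F$.

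For item~1, the inclusion $\alpha[f]\subseteq[\alpha f]$ is immediate: if $\varphi\in[f]$ then $D_{\alpha\varphi}=D_\varphi=D_f=D_{\alpha f}$ and $(\alpha\varphi)'=\alpha\varphi'=\alpha f'=(\alpha f)'$, so $\alpha\varphi\in[\alpha f]$. For the reverse inclusion I would use the hypothesis $\alpha\ne 0$: given $h\in[\alpha f]$, set $\varphi=\frac{1}{\alpha}h$; then $\varphi\in\mathcal F$, $D_\varphi=D_h=D_f$ and $\varphi'=\frac{1}{\alpha}h'=f'$, so $\varphi\in[f]$ and $h=\alpha\varphi\in\alpha[f]$. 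It is worth recording why $\alpha\ne 0$ cannot be dropped: for $\alpha=0$ the left-hand side collapses to the single zero function on $D_f$, while $[0\cdot f]$ contains every locally constant function on $D_f$.

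For item~2 it suffices to prove $[f]+[g]=[f+g]$; the identity $[f]-[g]=[f-g]$ then follows by combining it with item~1 applied to $g$ with $\alpha=-1$, since as sets $[f]-[g]=[f]+(-1)[g]=[f]+[-g]=[f+(-g)]$. The inclusion $[f]+[g]\subseteq[f+g]$ is the routine half: for $\varphi\in[f]$ and $\psi\in[g]$ we get $D_{\varphi+\psi}=D_\varphi\cap D_\psi=D_f\cap D_g=D_{f+g}$, standard by hypothesis, and $(\varphi+\psi)'=\varphi'+\psi'=f'+g'=(f+g)'$, so $\varphi+\psi\in[f+g]$. The reverse inclusion is the delicate point. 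Given $h\in[f+g]$, so $D_h=D_f\cap D_g$ and $h'=(f+g)'$, I must produce $\varphi\in[f]$ and $\psi\in[g]$ with $\varphi+\psi=h$ on $D_f\cap D_g$. On each connected component of $D_f\cap D_g$ the function $h-f-g$ has zero derivative, hence equals a constant; writing $\{I_k\}$ and $\{K_m\}$ for the components of $D_f$ and $D_g$, each component of $D_f\cap D_g$ has the form $I_k\cap K_m$ and carries a constant $d_{k,m}$. Since $\varphi-f$ must be constant on each $I_k$ and $\psi-g$ constant on each $K_m$, the problem reduces to solving the linear system $a_k+b_m=d_{k,m}$ over all pairs with $I_k\cap K_m\ne\varnothing$.

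The main obstacle is precisely the solvability of that system, and the step I expect to carry the proof is a small geometric observation: the bipartite intersection graph of the two families $\{I_k\}$, $\{K_m\}$ of pairwise disjoint intervals has no cycle. Indeed, if $I_{k_1},I_{k_2}$ are disjoint with $I_{k_1}$ entirely to the left of $I_{k_2}$ and each of $K_{m_1},K_{m_2}$ met both of them, then each $K_{m_j}$, being an interval containing a point of $I_{k_1}$ and a point of $I_{k_2}$, would contain a whole closed segment with left end in $I_{k_1}$ and right end in $I_{k_2}$; two such segments necessarily share a nondegenerate subinterval, contradicting $K_{m_1}\cap K_{m_2}=\varnothing$. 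Hence the intersection graph is a forest, so the system $a_k+b_m=d_{k,m}$ is solved by fixing a value arbitrarily at one vertex of each tree component and propagating along edges, never meeting a consistency constraint. The resulting $\varphi$ and $\psi$ lie in $[f]$ and $[g]$ and satisfy $\varphi+\psi=h$, which establishes $[f+g]\subseteq[f]+[g]$ and finishes the proof.
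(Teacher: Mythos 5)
The paper offers no proof at all here -- it simply declares the lemma ``easy to show'' -- so there is nothing to match your argument against line by line. What you have done is substantially more than the paper asks of itself, and it is the right thing to do: item~1 and the inclusions $\alpha[f]\subseteq[\alpha f]$ and $[f]+[g]\subseteq[f+g]$ are indeed the routine computations the author presumably had in mind, but you correctly identify that the reverse inclusion $[f+g]\subseteq[f]+[g]$ is \emph{not} trivial once $D_f$ and $D_g$ are allowed to be disconnected (a union of intervals, as the paper's ``standard'' domains permit). Your reduction to the linear system $a_k+b_m=d_{k,m}$ over the bipartite intersection graph of the components, and the observation that solvability follows from that graph being a forest, is exactly the content that the word ``easy'' sweeps under the rug. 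Your verification that the nonempty sets $I_k\cap K_m$ really are the components of $D_f\cap D_g$ is glossed over but holds because distinct components of $D_f$ (resp.\ $D_g$) are separated by a point outside $D_f$ (resp.\ $D_g$).

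The one step that needs strengthening is the acyclicity claim. As written, your argument only excludes $4$-cycles: you show that two distinct $K$'s cannot both meet two distinct $I$'s. In a bipartite graph, being $C_4$-free does not imply being a forest, so a $6$-cycle (say $I_{k_1}\text{--}K_{m_1}\text{--}I_{k_2}\text{--}K_{m_2}\text{--}I_{k_3}\text{--}K_{m_3}\text{--}I_{k_1}$, in which no two $K$'s share both neighbours) is not covered by what you wrote. The claim is still true, but it needs a slightly stronger geometric argument; for instance: the edges of any cycle correspond to pairwise disjoint nonempty intervals $I_k\cap K_m$, so one may pick the leftmost such edge $e=I_k\cap K_m$; in the cycle, $I_k$ carries a second edge $e'=I_k\cap K_{m'}$ and $K_m$ a second edge $e''=I_{k'}\cap K_m$, both lying to the right of $e$; whichever of $e',e''$ is the rightmost forces the interval $I_k$ (or $K_m$) to contain the other one entirely, producing a nonempty intersection of two distinct components of $D_f$ or of $D_g$, a contradiction. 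With that repair (or any equivalent ``leftmost edge'' argument), your proof is complete and, in my view, is the proof this lemma actually deserves.
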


	Now we can talk about primitives.

	\begin{df}
		Let $f$ and $F$ be two real functions with $F$ differentiable. We say that $F$ is a \textbf{primitive} of $f$ if $F' = f$. We will denote the set of all primitives of $f$ by $P(f)$:
		\[P(f) = \set{ g \colon D_f \to \R}{g'= f}.\]
	\end{df}

	Lets remark two things here:
	\begin{enumerate}
		\item When we say that $F' = f$, we mean that $D_{F'} = D_f$, $CD_{F'} = CD_f$ and that $F'(x) = f(x)$, for all $x \in D_f$. Usually $CD_F = CD_{F'} = CD_f = \R$ and $D_{F'} = D_F$, because $F$ is differentiable. So, if we want to check if $F$ is a primitive of $f$, we usually have to check if $D_F = D_f$ and if $F'(x) = f(x)$, for all $x \in D_f$.
		\item $F'=f \Leftrightarrow P(f) = [F]$.
	\end{enumerate}
	
	It follows from Lemma \ref{operations} the folowing Lemma:

	\begin{lem}\label{operations prim}
		Let $f$ and $g$ be functions such that $P(f) \ne \varnothing$ and $P(g) \ne \varnothing$.
		\begin{enumerate}
			\item If $\alpha \in \R$ and $\alpha \ne 0$, then $P(\alpha f) = \alpha P(f)$.
			\item If $D_f \cap D_g$ is standard, then $P(f+g) = P(f)+P(g)$ and $P(f-g) = P(f) - P(g)$.
		\end{enumerate}
	\end{lem}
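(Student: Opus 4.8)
The plan is to bootstrap everything from Lemma~\ref{operations}, using the remark that $F' = f \Leftrightarrow P(f) = [F]$. Since $P(f) \ne \varnothing$ and $P(g) \ne \varnothing$, I would begin by fixing a primitive $F \in P(f)$ and a primitive $G \in P(g)$. By the remark this immediately gives $P(f) = [F]$ and $P(g) = [G]$; moreover $F, G \in \mathcal{F}$, and since $F' = f$ and $G' = g$ we have $D_F = D_f$ and $D_G = D_g$. So the whole lemma becomes a matter of identifying the relevant equivalence classes and then quoting Lemma~\ref{operations}.

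For the first statement, I would check that $\alpha F$ is a primitive of $\alpha f$: it is differentiable, $D_{\alpha F} = D_F = D_f = D_{\alpha f}$, and $(\alpha F)' = \alpha F' = \alpha f$. Hence $P(\alpha f) = [\alpha F]$, and Lemma~\ref{operations}(1) rewrites this as $[\alpha F] = \alpha [F] = \alpha P(f)$, which is the claim. I would also make explicit why $\alpha = 0$ must be excluded: for $\alpha = 0$ the set $0\cdot P(f)$ collapses to the single zero function on $D_f$, whereas $P(0\cdot f)$ is a whole equivalence class of locally constant functions, so the identity fails — this is exactly why Lemma~\ref{operations}(1) carries the same restriction.

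For the second statement, the hypothesis that $D_f \cap D_g$ is standard is what makes $f+g$ and $f-g$ legitimate elements of $\mathcal{F}$, with domain $D_f \cap D_g$. I would then verify that $F+G$ is a primitive of $f+g$: it is differentiable on $D_F \cap D_G = D_f \cap D_g = D_{f+g}$ with $(F+G)' = F' + G' = f+g$. Therefore $P(f+g) = [F+G]$, and Lemma~\ref{operations}(2) gives $[F+G] = [F] + [G] = P(f) + P(g)$. The statement for $f-g$ is obtained the same way after replacing $G$ by $-G$ (invoking the first statement with $\alpha = -1$ to identify $P(-g) = -P(g)$), or directly from the second half of Lemma~\ref{operations}(2).

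I expect the only delicate point to be the domain bookkeeping: confirming $D_{\alpha F} = D_{\alpha f}$ and $D_{F\pm G} = D_{f\pm g}$, checking that ``standard'' is inherited by the relevant intersections so that the constructed functions genuinely lie in $\mathcal{F}$, and pinpointing where each hypothesis ($\alpha \ne 0$, $D_f\cap D_g$ standard) is consumed. These are precisely the conditions under which Lemma~\ref{operations} was stated, so once the translation via $P(f) = [F]$ is set up there is nothing substantial left to do.
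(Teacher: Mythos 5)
Your proof is correct and follows exactly the route the paper intends: the paper states only that Lemma~\ref{operations prim} ``follows from Lemma~\ref{operations}'', and your argument --- fix $F \in P(f)$, $G \in P(g)$, use the remark $F' = f \Leftrightarrow P(f) = [F]$ to translate everything into equivalence classes, then quote Lemma~\ref{operations} --- is precisely that deduction, with the domain bookkeeping and the role of the hypotheses $\alpha \ne 0$ and $D_f \cap D_g$ standard correctly identified.
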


\subsection{The Fundamental Theorems of Calculus and others}

	With our new definitions and notations, we can rewrite Theorem \ref{F1}:

	\begin{theo}[First Fundamental Theorem of Calculus]\label{t1}
		Let $I$ be an interval, $x_0 \in I$ and $f \colon I \to \R$ a function. If $f$ is \textbf{continuous}, then function $F \colon I \to \R$ given by $F(x) = \int_{x_0}^x f(t)dt$ is differentiable and $F \in P(f)$.
	\end{theo}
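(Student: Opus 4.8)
The plan is to observe that, once the new notation is unwound, this statement is logically the same as Theorem \ref{F1}, so the work splits into a translation step and the classical difference-quotient estimate. By the remarks following the definition of primitive, the assertion $F \in P(f)$ means precisely that $F$ is differentiable, that $D_F = D_f$ and $CD_F = CD_f$, and that $F'(x) = f(x)$ for every $x \in D_f$. The domain/codomain bookkeeping is immediate from the statement: $F$ is given as a function $I \to \R$ and $f$ is a function $I \to \R$, so $D_F = D_f = I$ and $CD_F = CD_f = \R$; once $F$ is known to be differentiable, $D_{F'} = D_F = I$ as well. Hence the only genuine content is that $F$ is differentiable on $I$ with $F'(x) = f(x)$ there.

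First I would check that $F$ is well defined: for each $x \in I$, the interval with endpoints $x_0$ and $x$ is a closed bounded subinterval of $I$, and a continuous function on a closed bounded interval is integrable, so $\int_{x_0}^x f(t)dt$ exists; thus $F$ really is a function $I \to \R$. Next, fix $x \in I$ and estimate the difference quotient. For $h \ne 0$ with $x + h \in I$, additivity of the integral over adjacent intervals gives
\[\frac{F(x+h) - F(x)}{h} - f(x) = \frac{1}{h}\int_x^{x+h}\bigl(f(t) - f(x)\bigr)dt.\]
Given $\epsilon > 0$, continuity of $f$ at $x$ provides $\delta > 0$ such that $|f(t) - f(x)| < \epsilon$ for all $t \in I$ with $|t - x| < \delta$; then for $0 < |h| < \delta$ with $x + h \in I$, the right-hand side has absolute value at most $\epsilon$. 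Letting $h \to 0$ (a two-sided limit at an interior point of $I$, and the appropriate one-sided limit at an endpoint of $I$) shows that $F'(x)$ exists and equals $f(x)$. Since $x \in I$ was arbitrary, $F$ is differentiable on $I$ and $F' = f$ in the strict sense above, i.e. $F \in P(f)$.

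The step I expect to require the most care is the endpoint handling: when $x$ is a left (resp. right) endpoint of $I$, only $h > 0$ (resp. $h < 0$) is admissible, so ``$F$ differentiable at $x$'' must be read as existence of the corresponding one-sided derivative, and the claim $D_{F'} = I$ must be interpreted accordingly — this is exactly the kind of domain subtlety the paper is emphasizing. Everything else — existence of the integral, additivity over adjacent subintervals, and the $\epsilon$–$\delta$ estimate — is routine. Alternatively, one may simply invoke Theorem \ref{F1} and note that ``$F$ is $C^1$ and $F'(x) = f(x)$ for all $x \in I$'' is precisely the unwound meaning of ``$F$ is differentiable and $F \in P(f)$''.
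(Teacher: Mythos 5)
Your proposal is correct. Note, though, that the paper itself offers no proof of Theorem \ref{t1}: it presents the statement purely as a notational restatement of Theorem \ref{F1}, which is cited to the literature, so the only content the paper actually relies on is the translation step — that ``$F$ is differentiable and $F \in P(f)$'' unwinds to ``$F$ is differentiable with $D_{F'} = D_f = I$ and $F'(x) = f(x)$ for all $x \in I$,'' which is (up to the $C^1$ conclusion, recovered anyway since $F' = f$ is continuous) exactly the conclusion of Theorem \ref{F1}. You handle that translation correctly, and your fallback remark at the end is precisely the paper's implicit argument. What your longer route buys is a self-contained proof: the well-definedness of $F$, the identity $\frac{F(x+h)-F(x)}{h} - f(x) = \frac{1}{h}\int_x^{x+h}\bigl(f(t)-f(x)\bigr)dt$, the $\epsilon$--$\delta$ estimate, and the explicit one-sided reading of differentiability at endpoints of $I$ are all sound, and the endpoint discussion is in fact the kind of domain bookkeeping the paper is advocating but does not spell out here. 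Either route is acceptable; yours proves strictly more than the paper records.
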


	Lets just remember here that $f$ is continuous in $[a,b]$, then $f$ is integrable in $[a,b]$. Thus, the function $F$ is well defined and Theorem \ref{t1} makes sense.
	
	If we just want to know if some function $f$ has a primitive, we can use Theorem \ref{t1}: if $D_f$ is an interval and $f$ is continuous, then $f$ has a primitive, that is, $P(f) \ne \varnothing$.
	
	We have to remark here that, if we want to make things easier to our students, we can limit ourselves to study primitives of functions whose domains are intervals, instead of studying primitives of functions whose domains are standard.

	If $D_f$ is standard but not an interval, then $D_f = \bigcup\limits_{i \in \mathcal{I}} I_i$, where $\cal I$ is a set of indexes, each $I_j$ is an interval, and $I_i \cap I_j = \varnothing$, when $i \ne j$. Considering $f$ continuous, we know that for each $i \in \mathcal{I}$, there exists a $F_i \colon I_i \to \R$ such that $F_i' = \left.f\right|_{I_i}$. Thus we can define the function $F \colon D_f \to \R$ by
	\[F(x) = F_i(x), \ \text{if} \ x \in I_i.\]
	Then it is easy to show that $F'=f$, that is, $P(f) \ne \varnothing$. Summarizing, we have the following Corollary:

	\begin{cor}\label{prim}
		If $D_f$ is standard and $f$ is continuous, then $P(f) \ne \varnothing$.
	\end{cor}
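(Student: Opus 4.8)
The plan is to reduce the general standard-domain case to the interval case, which is already handled by Theorem \ref{t1} together with the remark that a continuous function on an interval has a primitive. First I would invoke the structural fact, stated in the paragraph preceding the corollary, that a standard set $D_f$ which is not an interval decomposes as a disjoint union $D_f = \bigcup_{i \in \mathcal{I}} I_i$ of intervals with $I_i \cap I_j = \varnothing$ for $i \ne j$. (The case where $D_f$ is itself an interval is immediate from Theorem \ref{t1}, so I would dispose of it in one line and then assume we are in the genuinely disjoint situation.) For each index $i$, the restriction $f|_{I_i}$ is continuous on the interval $I_i$, so by Theorem \ref{t1} applied to $f|_{I_i}$ (choosing any basepoint $x_0 \in I_i$), there is a differentiable $F_i \colon I_i \to \R$ with $F_i' = f|_{I_i}$.

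Next I would assemble the pieces: define $F \colon D_f \to \R$ by $F(x) = F_i(x)$ whenever $x \in I_i$. This is well defined precisely because the $I_i$ are pairwise disjoint, so every $x \in D_f$ lies in exactly one $I_i$. The remaining task is to check that $F$ is differentiable on all of $D_f$ and that $F' = f$, i.e.\ that $D_{F'} = D_f$ and $F'(x) = f(x)$ for every $x$. Here is where the ``standard'' hypothesis on $D_f$ does the real work: I would fix $x \in D_f$, say $x \in I_i$, and split into cases according to whether $x$ is a left and/or right accumulation point of $D_f$. By properties (1)--(4) in the definition of a standard set, a one-sided neighbourhood of $x$ is contained in $D_f$ exactly on the sides where $x$ is a one-sided accumulation point, and since $I_i$ is an interval containing $x$, that one-sided neighbourhood actually lies inside $I_i$. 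Consequently the difference quotient of $F$ at $x$, from whichever side(s) are relevant, agrees with the difference quotient of $F_i$ at $x$; taking the limit gives $F'(x) = F_i'(x) = f(x)$, and the one-sided differentiability matches the one-sided structure of $D_f$ exactly, so $x \in D_{F'}$ with the correct value.

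I expect the main obstacle to be purely bookkeeping rather than conceptual: carefully matching the notion of differentiability at boundary-type points of $D_f$ (one-sided derivatives at endpoints, two-sided in the interior of an $I_i$) with the defining properties of a standard domain, so that the equality $F' = f$ holds in the strong sense the paper insists on ($D_{F'} = D_f$ and pointwise agreement), and not merely ``$F'(x) = f(x)$ on the interior of each piece.'' Once that case analysis is laid out, the conclusion $P(f) \ne \varnothing$ is immediate since $F \in P(f)$. I would present the argument at the level of detail the paper uses elsewhere, i.e.\ indicate the case split and the role of properties (1)--(4), without grinding through every $\varepsilon$.
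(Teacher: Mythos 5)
Your proposal is correct and follows essentially the same route as the paper: the paragraph preceding the corollary decomposes the standard domain into pairwise disjoint intervals $I_i$, obtains a primitive $F_i$ of $f|_{I_i}$ on each piece via Theorem \ref{t1}, and glues them into $F$ with $F'=f$. You in fact supply more detail than the paper (which dismisses the verification of $F'=f$ as ``easy to show''), by explaining how properties (1)--(4) of a standard set make the one-sided difference quotients of $F$ agree with those of the relevant $F_i$.
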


	Lets take a look on Integration by Parts:
	
	\begin{theo}[Integration by Parts]
		Let $D_f$, $D_g$ and $D_f \cap D_g$ be standard domains and let $f$ and $g$ be $C^1$ functions. With these assumptions, $P(fg') \ne \varnothing$, $P(f'g) \ne \varnothing$ and
		\[P(fg') = [fg] - P(f'g).\]
	\end{theo}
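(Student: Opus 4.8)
The plan is to treat the three assertions separately — first the non-emptiness claims $P(fg')\ne\varnothing$ and $P(f'g)\ne\varnothing$, and then the identity $P(fg')=[fg]-P(f'g)$, reducing the latter to the product rule together with the Lemma on operations between classes of primitives.

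For the non-emptiness, I would argue as follows. Since $f$ and $g$ are $C^1$, their derivatives $f'$ and $g'$ are continuous, hence the products $fg'$ and $f'g$ are continuous functions whose domain is $D_f\cap D_g$; as this set is standard by hypothesis, Corollary \ref{prim} yields $P(fg')\ne\varnothing$ and $P(f'g)\ne\varnothing$. A small point to check along the way is that restricting a differentiable function to a standard subset of its domain keeps it differentiable and does not change the rule of its derivative; this follows from the definition of a standard domain, since a one-sided accumulation point of $D_f\cap D_g$ is also a one-sided accumulation point of $D_f$ and of $D_g$.

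For the identity, the key input is the product rule: $fg$ is differentiable on $D_f\cap D_g$ and $(fg)'=f'g+fg'$. I would then fix a representative $G\in P(f'g)$ (legitimate by the previous step), observe that $fg-G$ is differentiable on $D_f\cap D_g$ with
\[(fg-G)'=(fg)'-G'=(f'g+fg')-f'g=fg',\]
so that $fg-G\in P(fg')$ and therefore $P(fg')=[fg-G]$. Since $P(f'g)=[G]$ and $D_{fg}\cap D_{G}=D_f\cap D_g$ is standard, Lemma \ref{operations prim} (equivalently, Lemma \ref{operations} applied to the definition of subtraction of classes) gives $[fg-G]=[fg]-[G]=[fg]-P(f'g)$, and the chain of equalities closes.

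The main obstacle is not any computation but the bookkeeping of domains and the fact that everything here is an equality of \emph{sets} of functions. One must verify that $fg$, $f'g$ and $fg'$ genuinely share the domain $D_f\cap D_g$ and that this is standard — which is exactly what the hypotheses provide — so that the operations $[\,\cdot\,]+[\,\cdot\,]$ and $[\,\cdot\,]-[\,\cdot\,]$ used above are defined. In particular, one should resist the temptation to obtain the identity by "cancelling" $P(f'g)$ from $[fg]=P(fg')+P(f'g)$; the clean route is to exhibit the single representative $fg-G$ and invoke Lemma \ref{operations prim}.
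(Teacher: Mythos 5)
Your proof is correct and follows essentially the same route as the paper: continuity of $fg'$ and $f'g$ on the standard domain $D_f\cap D_g$ plus Corollary \ref{prim} for non-emptiness, then the product rule combined with the operations lemmas for the identity. The only (cosmetic) difference is that you fix a representative $G\in P(f'g)$ and compute $(fg-G)'$ directly before invoking Lemma \ref{operations}, whereas the paper applies Lemma \ref{operations prim} at the level of the sets $P(\cdot)$ via $P(fg')=P((fg)'-f'g)=P((fg)')-P(f'g)$.
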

	
	\begin{proof}
		Lets first remark that, $D_f = D_{f'}$ and $D_g = D_{g'}$, because $f$ and $g$ are $C^1$ functions. Thus, $D_{fg} = D_{f'g} = D_{fg'} = D_f \cap D_g$ is standard.
		
		We know that $Fg \in P((fg)')$, so $P((fg)') \ne \varnothing$. We know also that $f$ and $g$ are $C^1$ functions, thus $fg'$ and $f'g$ are continuous, and it follows from Corollary \ref{prim} that $P(fg')\ne \varnothing$ and $P(f'g) \ne \varnothing$.
		
		Lets make some calculations.
		\[(fg)' = f'g + fg' \Rightarrow fg' = (fg)' - f'g.\]
		Thus, applying Lemma \ref{operations prim}, we have that
		\[P(fg') = P((fg)'-f'g) = P((fg)') - P(f'g) = [fg]-P(f'g). \qedhere\]
	\end{proof}

	One of the most important results proved in Calculus in order to develop techniques to find primitives is the following Theorem, which can be found in \cite{Spi}.

	\begin{theo}\label{f'=g'}
		Let $I$ be an interval and $f, g \colon I \to \R$. If $f$ and $g$ are differentiable and $f'=g'$, then there is a constant $c \in \R$ such that $f = g + c$.
	\end{theo}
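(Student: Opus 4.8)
The plan is to reduce to the special case $g = 0$: it suffices to show that a differentiable function on an interval whose derivative vanishes identically must be constant. Set $h = f - g$. Since $f$ and $g$ are both differentiable on $I$ and $D_f = D_g = I$, the function $h$ is differentiable on $I$ and $h' = f' - g' = 0$ at every point of $I$. If I can show $h$ is constant, say $h(x) = c$ for all $x \in I$, then $f = g + c$ and the theorem is proved.

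To show $h$ is constant I would fix a basepoint $x_0 \in I$ and prove $h(x) = h(x_0)$ for every $x \in I$. Suppose $x \in I$ with $x_0 < x$; since $I$ is an interval, the closed interval $[x_0, x]$ is contained in $I$, and $h$ restricted to $[x_0,x]$ is differentiable, hence continuous. The Mean Value Theorem then yields a point $\xi \in \, ]x_0, x[$ with $h(x) - h(x_0) = h'(\xi)\,(x - x_0)$. As $h'(\xi) = 0$, this forces $h(x) = h(x_0)$. The case $x < x_0$ is handled symmetrically, and $x = x_0$ is trivial. Hence $h$ takes the single value $c := h(x_0)$ throughout $I$.

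The one genuinely load-bearing ingredient is the Mean Value Theorem (equivalently Rolle's theorem, and behind it the Extreme Value Theorem for a continuous function on a closed bounded interval). This is also exactly where the hypothesis that $I$ is an \emph{interval} is used: connectedness is what allows me to join $x_0$ and $x$ by a closed subinterval lying inside the domain. If the domain were merely standard — a disjoint union of intervals — the argument would only give that $h$ is locally constant, and the value of the constant could differ from piece to piece; this is precisely the phenomenon illustrated by the $\int \frac{1}{x}\,\dx$ example in Section~1. So the main obstacle is not really an obstacle for a Calculus audience: granting the Mean Value Theorem, what remains is a short and routine deduction.
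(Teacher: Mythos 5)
Your proof is correct: reducing to $h = f - g$ with $h' = 0$ and applying the Mean Value Theorem on $[x_0,x] \subset I$ is exactly the standard argument, which is the one in the reference the paper cites for this theorem (the paper itself gives no proof). Your remark on where the interval hypothesis enters, and why the conclusion fails for a union of disjoint intervals, is also accurate and consistent with the paper's discussion of $P\left(x \mapsto \frac{1}{x}\right)$.
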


	Based on this, it is easy to prove the following corollary:

	\begin{cor}\label{prim2}
		Let $I$ be an interval and $f \colon I \to \R$. If $F \in P(f)$ then $P(f) = \set{F+c}{c \in \R}$.
	\end{cor}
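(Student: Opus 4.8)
The plan is to prove the corollary by a double inclusion, $\set{F+c}{c \in \R} \subseteq P(f)$ and $P(f) \subseteq \set{F+c}{c \in \R}$; the first inclusion is a one-line computation and the second is exactly where Theorem \ref{f'=g'} is used.

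First I would dispose of the easy inclusion. Fix $c \in \R$, and read $F + c$ as the sum of $F$ with the constant function $x \mapsto c$ on $I$. Since $D_F = D_f = I$, the function $F + c$ has domain $I$, it is differentiable as the sum of a differentiable function and a constant, and $(F + c)'(x) = F'(x) = f(x)$ for every $x \in I$; hence $D_{(F+c)'} = D_f$ and $F + c \in P(f)$. As $c$ was arbitrary, $\set{F+c}{c \in \R} \subseteq P(f)$.

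Next I would prove the reverse inclusion. Let $G \in P(f)$. By the definition of $P(f)$, the function $G$ is differentiable with $D_G = D_f = I$ and $G' = f$. Since also $F \in P(f)$ we get $F' = f = G'$, and both $F$ and $G$ are differentiable on the interval $I$; this is precisely the hypothesis of Theorem \ref{f'=g'}, which therefore provides a constant $c \in \R$ with $G = F + c$. Hence $G \in \set{F+c}{c \in \R}$, giving $P(f) \subseteq \set{F+c}{c \in \R}$, and combining the two inclusions proves the corollary.

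There is no real obstacle here beyond the bookkeeping of domains, which is already handled by the remarks following the definition of $P(f)$. The one point worth stressing — and the reason the statement assumes $I$ is an interval rather than merely a standard set — is the second inclusion: on a domain that is a disjoint union of several intervals, two primitives may differ by a different constant on each component, so the appeal to Theorem \ref{f'=g'} genuinely relies on connectedness of $I$.
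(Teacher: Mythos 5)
Your proof is correct and follows exactly the route the paper intends: the easy inclusion $\set{F+c}{c\in\R}\subseteq P(f)$ by direct differentiation, and the reverse inclusion by applying Theorem \ref{f'=g'}, which is all the paper itself offers (it leaves the corollary as an easy consequence of that theorem). Your closing remark about why the interval hypothesis is essential is also accurate and matches the paper's later treatment of disconnected domains via Corollary \ref{prim3}.
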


	The Corollary above is one of the main tools we use to calculate indefinite integrals. We will repeat here that, if we want to make Calculus easier, we can just study the primitives of functions whose domains are intervals, instead of functions whose domains are standard.

	Lets see an example before continuing.

	\begin{ex}
		Let $f \colon \R \to \R$ be the function given by $f(x) = x \cos(x)$. Then $f = \id \cdot \cos$, where $\id$ is the identity function on $\R$. Thus
		\begin{align*}
			& P(f) = P(\id\cdot \sin') = [\id\cdot \sin] - P(\id'\cdot \sin) = [\id\cdot \sin] - P(1 \cdot \sin) = \\
			& = [\id\cdot \sin]-[-\cos] = [\id\cdot \sin + \cos] = \\
			&= \set{x \mapsto x\sin(x)+ \cos(x) +c}{c \in \R}.
		\end{align*}
	\end{ex}
	
	The first Fundamental Theorem of Calculus uses the integral to define a primitive of a continuous function $f$. The Second Theorem of Calculus uses the primitive of a integrable function in order to calculate its integral:

	\begin{theo}[Second Fundamental Theorem of Calculus]\label{t2}
		Let $f \colon [a,b] \to \R$ a function. If $f$ is \textbf{integrable} and $F \in P(f)$, then $\int_a^b f(x)\dx = F(b) - F(a)$.
	\end{theo}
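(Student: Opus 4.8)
The plan is to follow the classical route: apply the Mean Value Theorem to $F$ on the pieces of an arbitrary partition, and then squeeze the resulting sum between the lower and upper sums of $f$. Note first that, since $F \in P(f)$, the function $F$ is differentiable on all of $[a,b]$ with $F' = f$; in particular $F$ is continuous on $[a,b]$. Note also that integrability of $f$ (in the Riemann sense) forces $f$ to be bounded on $[a,b]$, so the lower and upper sums used below are well defined.

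First I would fix an arbitrary partition $a = x_0 < x_1 < \cdots < x_n = b$ of $[a,b]$. On each subinterval $[x_{i-1}, x_i]$ the function $F$ is continuous and differentiable, so the Mean Value Theorem produces a point $t_i \in \ ]x_{i-1}, x_i[$ with
\[F(x_i) - F(x_{i-1}) = F'(t_i)\,(x_i - x_{i-1}) = f(t_i)\,(x_i - x_{i-1}).\]
Summing over $i$ and telescoping the left-hand side gives
\[F(b) - F(a) = \sum_{i=1}^{n} f(t_i)\,(x_i - x_{i-1}).\]

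Next I would bound each term. Writing $m_i = \inf \set{f(x)}{x \in [x_{i-1},x_i]}$ and $M_i = \sup \set{f(x)}{x \in [x_{i-1},x_i]}$, we have $m_i \le f(t_i) \le M_i$, so the sum above lies between the lower sum $\sum_i m_i (x_i - x_{i-1})$ and the upper sum $\sum_i M_i (x_i - x_{i-1})$ of $f$ for this partition. Since this holds for \emph{every} partition, the number $F(b) - F(a)$ is an upper bound for all lower sums and a lower bound for all upper sums; hence the supremum of the lower sums and the infimum of the upper sums both trap $F(b) - F(a)$. Because $f$ is integrable, those two extrema coincide and equal $\int_a^b f(x)\,\dx$, which forces $F(b) - F(a) = \int_a^b f(x)\,\dx$.

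The step I expect to be the real crux — or at least the one most easily glossed over — is the passage from the Mean Value Theorem to the telescoping identity: it is precisely here that we use that $F$ is differentiable at \emph{every} point of $[a,b]$ with derivative exactly $f$, which is what $F \in P(f)$ provides; mere continuity of $F$, or differentiability only on $]a,b[$, would not suffice to identify $F'(t_i)$ with $f(t_i)$. Everything afterward is the standard squeeze against the definition of the Riemann integral, and it is worth stressing that the only hypotheses on $f$ actually used are boundedness and integrability — continuity of $f$ plays no role here, in contrast with Theorem \ref{t1}.
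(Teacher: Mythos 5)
Your proof is correct and complete: the Mean Value Theorem on each subinterval, the telescoping identity, and the squeeze between lower and upper sums is exactly the classical argument. The paper itself does not prove this theorem — it defers to \cite{Spi} — and your argument is precisely the standard proof found there, so there is nothing to compare beyond noting that your remarks about where $F' = f$ at every point is used, and about continuity of $f$ being irrelevant, are both accurate.
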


	\begin{cor}\label{t3}
		If $f \colon [a,b] \to \R$ is continuous, then there exists $F \colon [a,b] \to \R$ such that $F'=f$ and $\int_a^bf(x)\dx = F(b) - F(a)$.
	\end{cor}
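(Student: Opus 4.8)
The plan is to simply chain together the two Fundamental Theorems of Calculus already established above, since the statement is exactly the ``constructive'' combination of Theorems \ref{t1} and \ref{t2}. First I would note that $[a,b]$ is an interval, hence a standard domain, and that $f$ is continuous on it; so Theorem \ref{t1} applies with $x_0 = a$, giving that the function $F \colon [a,b] \to \R$ defined by $F(x) = \int_a^x f(t)\,dt$ is differentiable and $F \in P(f)$, i.e.\ $F' = f$. This already produces the primitive $F$ required by the first half of the conclusion.

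Next I would invoke the remark recorded right after Theorem \ref{t1}: a continuous function on a closed bounded interval is integrable there. Hence $f$ is integrable on $[a,b]$, and the very function $F$ just constructed lies in $P(f)$, so the hypotheses of Theorem \ref{t2} are met for this pair $(f,F)$. Applying Theorem \ref{t2} then yields $\int_a^b f(x)\,\dx = F(b) - F(a)$, which is the second half of the conclusion. Since both assertions hold for the single function $F$ built in the first step, the corollary follows.

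I do not expect any real obstacle here; the only point needing care is bookkeeping of hypotheses — making sure that continuity on $[a,b]$ simultaneously delivers the differentiability of $F$ (through Theorem \ref{t1}) and the integrability of $f$ (needed for Theorem \ref{t2}), and that it is the \emph{same} $F$ that is used in both theorems. An alternative route for the existence step would be to quote Corollary \ref{prim} (or Corollary \ref{prim2}) to obtain some primitive $F$ of $f$ and then feed that $F$ into Theorem \ref{t2}; but going through Theorem \ref{t1} is cleaner, because it hands us an explicit primitive rather than merely asserting that one exists.
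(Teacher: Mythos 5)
Your proof is correct and matches the intended argument: the paper states this corollary without proof, precisely because it follows by feeding the primitive $F(x) = \int_a^x f(t)\,dt$ from Theorem \ref{t1} into Theorem \ref{t2}, using that continuity on $[a,b]$ gives integrability. Your bookkeeping of the hypotheses is exactly what is needed.
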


\subsection{The integral $\int \frac{1}{x}\dx$}
	
	With our notation, we will find $P\left(x \mapsto \frac{1}{x}\right)$, instead of $\int \frac{1}{x} \, \dx$, which is the same.

	Let $f,F \colon \R^* \to \R$ be the functions given by the rules $f(x) = \frac{1}{x}$ and $F(x) = \ln|x|$. Its easy to see that $F \in P(f)$, then $P(f) = [F]$.
	
	But, $\left.F\right|_{\R_-^*} \in P\left(\left.f\right|_{\R_-^*}\right)$, and $\left.F\right|_{\R_+^*} \in P\left(\left.f\right|_{\R_+^*}\right)$. Thus, by Corollary \ref{prim2}, 
	\[P\left(\left.f\right|_{\R_-^*}\right) = \set{\left.F\right|_{\R_-^*} + c}{c \in \R} \quad \text{and} \quad P\left(\left.f\right|_{\R_+^*}\right) = \set{\left.F\right|_{\R_+^*} + c}{c \in \R}.\]

	Thus,
	\begin{align*}
		& \varphi \in P(f) \Leftrightarrow \left.\varphi\right|_{\R_-^*} \in P\left(\left.f\right|_{\R_-^*}\right) \ \text{and} \ \left.\varphi\right|_{\R_+^*} \in P\left(\left.f\right|_{\R_+^*}\right) \Leftrightarrow \\
		& \Leftrightarrow \left.\varphi\right|_{\R_-^*} = \left.F\right|_{\R_-^*} + c_1 \ \text{and} \ \left.\varphi\right|_{\R_+^*} = \left.F\right|_{\R_+^*} + c_2, \ \text{for some} \ c_1,c_2 \in \R \Leftrightarrow \\
		& \Leftrightarrow \varphi(x) =
		\begin{cases}
			\ln|x| + c_1, & \text{if} \ x < 0; \\
			\ln|x| + c_2, & \text{if} \ x > 0. \\
		\end{cases}
	\end{align*}

	Lets remark here that we could have used the following corollary which is very useful to work with functions whose domains are standard.
		
	\begin{cor}\label{prim3}
		Let $\cal I$ be a set of indexes and, for each $i \in \mathcal{I}$, let $I_i$ be an interval. Lets also suppose that $I_i \cap I_j = \varnothing$, if $i \ne j$. With the above hypothesis, if $D_f = \bigcup\limits_{i \in \mathcal{I}} I_i$ and $F \in P(f)$, then $\varphi \in P(f)$ if, and only if, for each $i \in \mathcal{I}$ there exists $c_i \in \R$ such that $\varphi(x) = F(x) + c_i$, for all $x \in I_i$.
	\end{cor}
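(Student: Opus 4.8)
The plan is to treat the problem one interval at a time and reduce to Corollary~\ref{prim2}. First I would restate the claim in primitive language: $\varphi\in P(f)$ means precisely that $\varphi\colon D_f\to\R$ is differentiable and $\varphi'=f$ (so $D_\varphi=D_f$ and $\varphi'(x)=f(x)$ for all $x\in D_f$), and we are handed one such witness $F$. Two elementary facts about restrictions will carry most of the weight. (i) If $\psi\colon D_f\to\R$ is differentiable at $p$ and $p$ lies in some $I_i$, then, since every point of an interval is an accumulation point of that interval, the restricted difference quotients converge to the same limit, so $\psi|_{I_i}$ is differentiable at $p$ with $(\psi|_{I_i})'(p)=\psi'(p)$; applied at every point of $I_i$ this gives $(\psi|_{I_i})'=\psi'|_{I_i}$. (ii) Because the $I_i$ are pairwise disjoint and $D_f=\bigcup_i I_i$ is standard, each $I_i$ is a neighbourhood of each of its points \emph{relative to} $D_f$: for an interior point of $I_i$ this is immediate, and for an endpoint $p\in I_i$ the standardness axioms force $D_f$ to contain a whole one-sided interval at $p$ on the side into which $I_i$ extends, while on the other side $p$ cannot be an accumulation point of $D_f$ (else $D_f$ would contain a connected neighbourhood of $p$ reaching into $I_i$, contradicting $I_i$ being a maximal subinterval), so a small relative neighbourhood of $p$ sits inside $I_i$. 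I would prove (i) and (ii) before anything else.

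For the ``only if'' direction, assume $\varphi\in P(f)$ and fix $i\in\mathcal I$. By (i) applied to $\varphi$ and to $F$, both $\varphi|_{I_i}$ and $F|_{I_i}$ lie in $P(f|_{I_i})$. Since $I_i$ is an interval, Corollary~\ref{prim2} yields $P(f|_{I_i})=\set{F|_{I_i}+c}{c\in\R}$, so $\varphi|_{I_i}=F|_{I_i}+c_i$ for some $c_i\in\R$, i.e. $\varphi(x)=F(x)+c_i$ for every $x\in I_i$. Doing this for each $i$ produces the required family $(c_i)_{i\in\mathcal I}$.

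For the ``if'' direction, suppose $\varphi\colon D_f\to\R$ satisfies $\varphi(x)=F(x)+c_i$ on $I_i$ for suitable $c_i\in\R$. Fix $p\in D_f$ and let $i$ be the unique index with $p\in I_i$. By (ii) there is a relative neighbourhood $V$ of $p$ in $D_f$ with $V\subset I_i$, and on $V$ the function $\varphi$ coincides with $F+c_i$, which is differentiable at $p$ with derivative $F'(p)=f(p)$. Differentiability being a local notion, $\varphi$ is differentiable at $p$ with $\varphi'(p)=f(p)$. As $p$ was arbitrary, $\varphi'=f$ on $D_f$, hence $\varphi\in P(f)$.

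I expect fact (ii) to be the only delicate point, and it is also where the hypotheses must be read carefully: the corollary holds as stated only when the $I_i$ are the connected components (the maximal subintervals) of $D_f$. For an arbitrary partition of a standard set into disjoint intervals --- say $(0,1]=\bigcup_{n\ge1}\left(\tfrac1{n+1},\tfrac1n\right]$ --- the ``if'' direction already fails, since non-constant $c_i$ yield a $\varphi$ that is not even continuous. So in the write-up I would either add ``each $I_i$ is relatively open in $D_f$'' to the hypotheses or state at the outset that the $I_i$ are the components of $D_f$ and deduce (ii) from the standardness axioms; everything else is a routine component-by-component application of Corollary~\ref{prim2}.
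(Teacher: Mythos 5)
Your proof is correct and follows the same route the paper takes implicitly: Corollary \ref{prim3} is stated without proof, but the preceding computation of $P\left(x \mapsto \frac{1}{x}\right)$ is exactly your argument carried out for the two components $\R_-^*$ and $\R_+^*$ (restrict to each interval, apply Corollary \ref{prim2} there, glue back). Your point (ii) is moreover a genuine catch rather than pedantry: as literally stated the corollary fails for a non-maximal decomposition such as $\left]0,1\right] = \bigcup_{n\ge 1}\left]\frac{1}{n+1},\frac{1}{n}\right]$, where distinct constants $c_i$ yield a discontinuous $\varphi$, so the hypothesis should indeed be that the $I_i$ are the connected components of the standard set $D_f$ --- which is how the corollary is actually used throughout the paper, and which is precisely what makes each $I_i$ relatively open in $D_f$ so that the ``if'' direction goes through.
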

	
\subsection{Changing variables without "mathemagic"}
	
	In Calculus, we have 3 situations we use changing of variables:
	\begin{enumerate}
		\item When we want to calculate $P((f\circ g)\cdot g')$ and we first calculate $P(f)$.
		\item When we want to calculate $P(f)$ but it is difficult and we first calculate $P((f\circ g)\cdot g')$, where $g$ is a convenient function we choose.
		\item When we want to calculate one of the sides of the equality $\int_{g(a)}^{g(b)} f(x) \dx = \int_a^b f(g(x))\cdot g'(x) \dx$, but we calculate the other side instead, because it is easier.
	\end{enumerate}

	We can use 4 theorems to solve this 3 situations. Lets see the theorems:

	\begin{theo}\label{x-u indefinida}
		Let $f$ and $g$ be two real functions such that $g$ is differentiable and $\mathrm{Im}_g \subset D_f$.
		\begin{enumerate}
			\item If $F \in P(f)$, then $P((f\circ g)\cdot g') = [F\circ g]$.
			\item If $F \in D_f$ and $D_g$ is an interval, then $P((f\circ g)\cdot g') = \set{F\circ g + c}{c \in \R}$.
		\end{enumerate}
	\end{theo}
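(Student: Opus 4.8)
The plan is to reduce both parts to the chain rule, together with the domain bookkeeping the paper has already set up, and then to invoke the remark that $\varphi' = h \Leftrightarrow P(h) = [\varphi]$ for part 1 and Corollary \ref{prim2} for part 2. Before anything else I would check that $F\circ g$ is a legitimate element of $\mathcal{F}$ and, crucially, that its domain agrees with the domain of $(f\circ g)\cdot g'$. Since $F \in P(f)$ we have $F' = f$, hence $D_F = D_f$, and the hypothesis $\mathrm{Im}_g \subset D_f$ guarantees that $f\circ g$ and $F\circ g$ are defined on all of $D_g$. Because $g$ is differentiable, $D_{g'} = D_g$, so $D_{(f\circ g)\cdot g'} = D_{f\circ g}\cap D_{g'} = D_g$; and $F\circ g$ is differentiable on $D_g$ by the chain rule, so $D_{(F\circ g)'} = D_g$ as well. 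All the codomains in sight are $\R$.

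For part 1, I would then apply the chain rule pointwise: for every $x \in D_g$,
\[(F\circ g)'(x) = F'(g(x))\,g'(x) = f(g(x))\,g'(x) = \bigl((f\circ g)\cdot g'\bigr)(x).\]
Combined with the domain and codomain check from the previous paragraph, this says exactly that $(F\circ g)' = (f\circ g)\cdot g'$ in the strict sense used in this paper, i.e. $F\circ g \in P\bigl((f\circ g)\cdot g'\bigr)$. By the remark that $\varphi' = h$ is equivalent to $P(h) = [\varphi]$, we conclude $P\bigl((f\circ g)\cdot g'\bigr) = [F\circ g]$.

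For part 2, reading the extra hypothesis as $F \in P(f)$ with $D_g$ an interval, the function $(f\circ g)\cdot g'$ has domain the interval $D_g$, and part 1 already exhibits the primitive $F\circ g$, so in particular $P\bigl((f\circ g)\cdot g'\bigr) \ne \varnothing$. Corollary \ref{prim2}, applied to the function $(f\circ g)\cdot g' \colon D_g \to \R$ with the primitive $F\circ g$, then yields $P\bigl((f\circ g)\cdot g'\bigr) = \set{F\circ g + c}{c \in \R}$. The only subtle point in the whole argument — and the thing I would be most careful about — is this domain bookkeeping: the combination ``$\mathrm{Im}_g \subset D_f$'' plus ``$g$ differentiable'' is precisely what forces $D_{F\circ g} = D_{(f\circ g)\cdot g'} = D_g$, and in this framework $F\circ g$ counting as a primitive of $(f\circ g)\cdot g'$ demands that the domains (and codomains) literally coincide, not merely that the rules agree wherever both happen to be defined. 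No estimates or limiting arguments enter beyond the chain rule, which we take as known.
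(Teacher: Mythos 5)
Your proof is correct and follows essentially the same route as the paper's: establish $F\circ g \in P\bigl((f\circ g)\cdot g'\bigr)$ via the chain rule (using $\mathrm{Im}_g \subset D_f = D_F$ for the domain matching), invoke the remark that $\varphi' = h \Leftrightarrow P(h) = [\varphi]$ for part 1, and apply Corollary \ref{prim2} on the interval $D_g$ for part 2. You are somewhat more explicit than the paper about the domain bookkeeping and about reading the hypothesis ``$F \in D_f$'' in part 2 as the intended ``$F \in P(f)$'', but the argument is the same.
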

		
	\begin{proof}
		Lets suppose that $F \in P(f)$. That means that $F$ is differentiable and that $F' = f$. Then $\mathrm{Im}_g \subset D_f = D_F$. Using the chain rule, we have that $(F\circ g)' = (F'\circ g)\cdot g' = (f \circ g)\cdot g'$. Therefore $F\circ g \in P((f\circ g)\cdot g')$.
			
		Now, supposing also that $D_g$ is an interval, then $D_{f\circ g} = D_g = D_{(f\circ g)\cdot g'}$ is an interval and, by the Corollary \ref{prim2}, $P((f\circ g)\cdot g') = \set{F\circ g + c}{c \in \R}$.
	\end{proof}

	\begin{theo}\label{x-u inversa}
		Let $g$ be a diffeomorphism.
		\begin{enumerate}
			\item If $\mathrm{Im}_g = D_f$, then $P(f) = \set{H \circ g^{-1}}{H \in P((f\circ g)\cdot g')}$.
			\item If $\mathrm{Im}_g = D_f$, $D_f$ is an interval and $H \in P((f\circ g)\cdot g')$ , then $P(f) = \set{H\circ g^{-1} + c}{c \in \R}$.
		\end{enumerate}
	\end{theo}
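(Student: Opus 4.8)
The plan is to reduce Theorem~\ref{x-u inversa} to the already-established Theorem~\ref{x-u indefinida} by running the forward change-of-variables formula with the inverse diffeomorphism $g^{-1}$. First I would fix the bookkeeping forced by our domain conventions: since $g$ is a diffeomorphism, $g^{-1}$ is differentiable with $D_{g^{-1}} = \mathrm{Im}_g$ and $\mathrm{Im}_{g^{-1}} = D_g$, and differentiating $g \circ g^{-1} = \id$ via the chain rule gives $g'(g^{-1}(y)) \neq 0$ and $(g^{-1})'(y) = 1/g'(g^{-1}(y))$ for every $y \in \mathrm{Im}_g$. Moreover, since $g$ is differentiable and $\mathrm{Im}_g = D_f$, one has $D_{(f\circ g)\cdot g'} = D_g$, so the pair $(f\circ g)\cdot g'$ and $g^{-1}$ meets the hypotheses of Theorem~\ref{x-u indefinida}: $g^{-1}$ is differentiable and $\mathrm{Im}_{g^{-1}} = D_g = D_{(f\circ g)\cdot g'}$.

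The key computation, which I would carry out once and carefully (checking equality of domains and not just of rules), is
\[
\bigl(((f\circ g)\cdot g')\circ g^{-1}\bigr)\cdot (g^{-1})' = f ,
\]
since at $y \in D_f$ the left-hand side equals $f(g(g^{-1}(y)))\cdot g'(g^{-1}(y))\cdot\bigl(1/g'(g^{-1}(y))\bigr) = f(y)$. Feeding this identity into Theorem~\ref{x-u indefinida}(1), with $(f\circ g)\cdot g'$ in the role of $f$ and $g^{-1}$ in the role of $g$, yields: whenever $H \in P((f\circ g)\cdot g')$ we get $P(f) = [H\circ g^{-1}]$, and in particular $H\circ g^{-1} \in P(f)$. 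This already gives the inclusion $\set{H\circ g^{-1}}{H \in P((f\circ g)\cdot g')} \subset P(f)$.

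For the reverse inclusion of part~(1): if $P((f\circ g)\cdot g') = \varnothing$ then Theorem~\ref{x-u indefinida}(1) forces $P(f) = \varnothing$ too, and both sides coincide; otherwise, given $\varphi \in P(f)$, put $H := \varphi\circ g$. Theorem~\ref{x-u indefinida}(1) applied with $F = \varphi$ gives $H = \varphi\circ g \in P((f\circ g)\cdot g')$, and since $g\circ g^{-1}$ is the identity on $\mathrm{Im}_g = D_f$ we get $H\circ g^{-1} = \varphi$; hence $\varphi$ lies in the right-hand set, proving part~(1). For part~(2), assume additionally that $D_f$ is an interval and fix $H \in P((f\circ g)\cdot g')$; part~(1) gives $H\circ g^{-1} \in P(f)$, and $H\circ g^{-1}$ is differentiable on the interval $D_f$, so Corollary~\ref{prim2} gives $P(f) = \set{H\circ g^{-1} + c}{c \in \R}$.

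I expect the only real obstacle to be the domain/codomain discipline rather than anything analytic: one must verify $D_{(f\circ g)\cdot g'} = D_g$ and $\mathrm{Im}_{g^{-1}} = D_g$, and must make sure that each displayed equality of functions is genuinely an equality of domains as well as of formulas. The analytic content is entirely carried by the chain-rule expression for $(g^{-1})'$ and by one invocation of Theorem~\ref{x-u indefinida}.
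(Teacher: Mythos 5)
Your proof is correct and is essentially the paper's own argument in modular form: the identity $\bigl(((f\circ g)\cdot g')\circ g^{-1}\bigr)\cdot (g^{-1})' = f$ that you feed into Theorem~\ref{x-u indefinida} is exactly the chain-rule computation the paper carries out inline for the inclusion $\set{H\circ g^{-1}}{H \in P((f\circ g)\cdot g')} \subset P(f)$, and your reverse inclusion via $H := \varphi\circ g$ and $H\circ g^{-1} = \varphi$ matches the paper's forward inclusion verbatim. Your explicit handling of the case $P((f\circ g)\cdot g') = \varnothing$ and the domain bookkeeping ($D_{(f\circ g)\cdot g'} = D_g = \mathrm{Im}_{g^{-1}}$) are welcome additions the paper leaves implicit.
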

	
	\begin{proof}
		Let $F \in P(f)$. Then $F = F\circ g \circ g^{-1}$, because $D_F = D_f = \mathrm{Im}_g$. If we call $G = F \circ g$, then $F = G \circ g^{-1}$ and
		\[G' = (F\circ g)' = (F'\circ g)\cdot g'= (f\circ g)\cdot g'.\]
		Therefore $G \in P((f\circ g)\cdot g')$ and $F \in \set{H \circ g^{-1}}{H \in P((f\circ g)\cdot g')}$.
		
		Lets now suppose that $F \in \set{H \circ g^{-1}}{H \in P((f\circ g)\cdot g')}$. Then there exists an $G \in P((f\circ g)\cdot g')$ such that $F = G \circ g^{-1}$. Therefore $D_F = D_{g^{-1}} = D_f$ and
		\begin{align*}
			& F' = \left(G'\circ g^{-1}\right)\cdot {g^{-1}}' =  \left[ \left((f \circ g)\cdot g'\right) \circ g^{-1}\right] \cdot {g^{-1}}' = \\
			& = \left(f \circ g \circ g^{-1}\right)\cdot \left(g'\circ g^{-1}\right) \cdot {g^{-1}}' = f \cdot \left(g \circ g^{-1} \right) = f.
		\end{align*}
		Therefore $F \in P(f)$ and we conclude that
		\[P(f) = \set{H \circ g^{-1}}{H \in P((f\circ g)\cdot g')}.\]
	
		If $D_f$ is an interval and $H \in P\left((f\circ g)\cdot g'\right)$, then $H \circ g^{-1} \in P(f)$. Thus, by Corollary \ref{prim2}, $P(f) = \set{H\circ g^{-1} + c}{c \in \R}$.
	\end{proof}

	In the theorems above, we do not need to suppose that $f$ is continuous neither that $g'$ is continuous, because we are not using the First Fundamental Theorem of Calculus.

	Sometimes the theorems above are not sufficient, so we have another one:

	\begin{theo}\label{x-u not diffeomorfismo}
		Let $g\colon D_g \to D_f$ be a differentiable and bijective function. Lets suppose also that $g^{-1}$ is continuous and that $g^{-1}$ is differentiable in the interior of its domain. 
		\begin{enumerate}
			\item If $f \colon D_f \to \R$ is continuous, then $P(f) = \set{H\circ g^{-1}}{H \in P((f\circ g)\cdot g')}$.
			\item If $D_f$ is an interval and $H \in P((f\circ g)\cdot g')$, then $P(f) = \set{H \circ g^{-1} + c}{c \in \R}$.
		\end{enumerate}
	\end{theo}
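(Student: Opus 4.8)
The plan is to follow the structure of the proof of Theorem \ref{x-u inversa}: first establish the set equality of item~1 by proving its two inclusions, and then deduce item~2 from item~1 together with Corollary \ref{prim2}. The one genuinely new difficulty is that $g^{-1}$ may fail to be differentiable at the endpoints of $D_f$, so the chain rule computation that works for a diffeomorphism breaks down there; I would close that gap using the continuity of $f$ and the Mean Value Theorem. Throughout I will use that $g$ maps $D_g$ bijectively onto $D_f$, so that $\mathrm{Im}_g=D_f=D_{g^{-1}}$ and $g\circ g^{-1}$ is the identity on $D_f$, and that $P(f)\ne\varnothing$ by Corollary \ref{prim}, since $f$ is continuous on the standard domain $D_f$.

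For the inclusion $P(f)\subseteq\set{H\circ g^{-1}}{H\in P((f\circ g)\cdot g')}$ I would take $F\in P(f)$ and set $H=F\circ g$. Since $F$ is differentiable with $D_F=D_f\supseteq\mathrm{Im}_g$ and $g$ is differentiable, the chain rule shows that $H$ is differentiable with $H'=(F'\circ g)\cdot g'=(f\circ g)\cdot g'$, so $H\in P((f\circ g)\cdot g')$; and $H\circ g^{-1}=F\circ g\circ g^{-1}=F$. (This direction uses neither the continuity nor the differentiability of $g^{-1}$, and it incidentally shows $P((f\circ g)\cdot g')\ne\varnothing$.)

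For the reverse inclusion I would take $H\in P((f\circ g)\cdot g')$ and set $F=H\circ g^{-1}$, so that $D_F=D_{g^{-1}}=D_f$; what remains is to show that $F$ is differentiable on $D_f$ with $F'=f$. On $\mathrm{int}(D_f)$ the chain rule applies as in Theorem \ref{x-u inversa}: for $x\in\mathrm{int}(D_f)$, $g^{-1}$ is differentiable at $x$, and differentiating $g\circ g^{-1}=\id$ at $x$ (legitimate, since $g^{-1}$ is differentiable at $x$ and $g$ at $g^{-1}(x)$) gives $g'(g^{-1}(x))\cdot(g^{-1})'(x)=1$; hence
\[F'(x)=H'(g^{-1}(x))\cdot(g^{-1})'(x)=f(g(g^{-1}(x)))\cdot g'(g^{-1}(x))\cdot(g^{-1})'(x)=f(g(g^{-1}(x)))=f(x),\]
where we used in turn $H'=(f\circ g)\cdot g'$, the relation just obtained, and $g\circ g^{-1}=\id$. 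The hard part will be the remaining case: a point $p\in D_f$ on the boundary of $D_f$, where $g^{-1}$ need not be differentiable. Suppose $p$ is a right accumulation point of $D_f$, so that $[p,p+\epsilon[\,\subset D_f$ for some $\epsilon>0$ (the left-hand case being symmetric), and note $]p,p+\epsilon[\,\subset\mathrm{int}(D_f)$. Then $F=H\circ g^{-1}$ is continuous on $[p,p+\epsilon[$ as a composition of continuous functions, and differentiable on $]p,p+\epsilon[$ with $F'=f$ there by the display above; so for each $x\in\,]p,p+\epsilon[$ the Mean Value Theorem provides $\xi_x\in\,]p,x[$ with $\frac{F(x)-F(p)}{x-p}=F'(\xi_x)=f(\xi_x)$, and since $\xi_x\to p^+$ as $x\to p^+$ and $f$ is continuous at $p$, the right-hand side tends to $f(p)$; hence $F$ is right-differentiable at $p$ with right derivative $f(p)$. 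Putting the two cases together, $F$ is differentiable on all of $D_f$ with $F'=f$, that is, $F\in P(f)$; this proves item~1.

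For item~2, assuming in addition that $D_f$ is an interval (and keeping $f$ continuous, as in item~1), I would fix $H\in P((f\circ g)\cdot g')$. By item~1, $H\circ g^{-1}\in P(f)$, and since $D_f$ is an interval Corollary \ref{prim2} gives $P(f)=\set{H\circ g^{-1}+c}{c\in\R}$. In summary, the argument is routine manipulation with chain rules once differentiability is available; the only delicate point --- and the place where continuity of $f$ is indispensable --- is recovering the one-sided differentiability of $F$ at the endpoints of $D_f$ from the finiteness of $\lim F'=f$ there, by way of the Mean Value Theorem.
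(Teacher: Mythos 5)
Your proof is correct and follows essentially the same route as the paper: the first inclusion via the chain rule as in Theorem \ref{x-u inversa}, the reverse inclusion by the chain rule at interior points, a separate one-sided argument at boundary points using the continuity of $f$, and Corollary \ref{prim2} for item~2. The only cosmetic difference is that at the boundary points you apply the Mean Value Theorem directly, whereas the paper invokes L'Hospital's rule --- your MVT computation is precisely the proof of the L'Hospital instance being used, so the two arguments coincide.
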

	
	\begin{proof}[Proof of Theorem \ref{x-u not diffeomorfismo}]
		Just like it was done at Theorem \ref{x-u inversa}, we can prove that $P(f) \subset \set{H\circ g^{-1}}{H \in P((f\circ g)\cdot g')}$.
		
		Lets suppose now that $F \in \set{H\circ g^{-1}}{H \in P((f\circ g)\cdot g')}$. Then there exists an $G \in P((f\circ g)\cdot g')$ such that $F = G \circ g^{-1}$. Therefore $D_F = D_{g^{-1}} = D_f$.
		
		Let $p \in D_F$ be an interior point. Thus
		\begin{align*}
			& F'(p) = G'\left(g^{-1}(p)\right)\cdot {g^{-1}}'(p) =  \left[ \left(\left(f \circ g\right) \left(g^{-1}(p)\right)\cdot g'\left(g^{-1}(p) \right)\right) \right] \cdot {g^{-1}}'(p) = \\
			& = f(p)\cdot g'\left(g^{-1}(p)\right) \cdot {g^{-1}}'(p) = f(p) \cdot \left(g \circ g^{-1} \right)'(p) = f(p) \cdot 1 = f(p).
		\end{align*}
		
		Now, lets suppose that $p \in D_f$ is not an interior point, that is, $p$ is an endpoint of one of the intervals which form $D_f$ (once $D_f$ is an interval or an union of intervals). Without loss of generality, we can suppose that $p$ is a left end point of one of the intervals that form $D_f$, but $p$ is not a left accumulation point of $D_f$. Thus,
		\[\lim_{x \to p} \frac{F(x)-F(p)}{x-p} = \lim_{x \to p^+} \frac{\cancelto{0}{F(x)-F(p)}}{\cancelto{0}{x-p}} = \lim_{x \to p^+} \frac{F'(x)}{1} = \lim_{x\to p^+} f(x) = f(p).\]
		In the above calculations, we have used the fact that $F'(x) = f(x)$ if $x$ is an interior point of $D_f$, and we have also used the L'Hospital rules and the hypothesis that $f$ is continuous.
		
		Therefore $F'(p) = f(p)$. Thus, $F'(x) = f(x)$, for all $x \in D_F = D_f$. That is $F \in P(f)$.
		
		The second part of the Theorem follows form Corollary \ref{prim2}.
	\end{proof}

	\begin{theo}\label{x-u definida}
		Let $g \colon [a,b] \to \R$ a $C^1$ function and $f \colon [c,d] \to \R$ a continuous function such that $g([a,b]) \subset [c,d]$. Then
		\[\int_{g(a)}^{g(b)} f(x)\dx = \int_a^b f(g(x))\cdot g'(x) \dx.\]
	\end{theo}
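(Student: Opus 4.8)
The plan is to reduce both sides of the identity to the same difference of values of a single primitive of $f$, using the two Fundamental Theorems of Calculus together with part~1 of Theorem~\ref{x-u indefinida}. Since $f$ is continuous on the interval $[c,d]$, Theorem~\ref{t1} (equivalently Corollary~\ref{prim}) guarantees that $f$ has a primitive, so I would fix $F \colon [c,d] \to \R$ with $F \in P(f)$, i.e.\ $F' = f$ on $[c,d]$. This is exactly the step for which the hypothesis "$f$ continuous" is needed.

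Next I would record the two integrability facts. Because $g$ is $C^1$, $g'$ is continuous, and since $g([a,b]) \subset [c,d] = D_f$, the function $(f \circ g)\cdot g'$ is continuous on $[a,b]$ and hence integrable there; this is where "$g$ is $C^1$" is used. Also $\mathrm{Im}_g \subset D_f = D_F$, so part~1 of Theorem~\ref{x-u indefinida} applies and yields $F \circ g \in P\bigl((f\circ g)\cdot g'\bigr)$ — concretely, by the chain rule $(F\circ g)' = (F' \circ g)\cdot g' = (f\circ g)\cdot g'$ on $[a,b]$. On the other side, $f$ is continuous, hence integrable, on the closed interval with endpoints $g(a)$ and $g(b)$ (a subinterval of $[c,d]$), and the corresponding restriction of $F$ is a primitive of $f$ there.

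Now I would apply the Second Fundamental Theorem of Calculus (Theorem~\ref{t2}) to each side. On the right-hand side, $(f\circ g)\cdot g'$ is integrable on $[a,b]$ with primitive $F\circ g$, so
\[\int_a^b f(g(x))\cdot g'(x)\,\dx = (F\circ g)(b) - (F\circ g)(a) = F(g(b)) - F(g(a)).\]
On the left-hand side, Theorem~\ref{t2} applied with the primitive $F$ gives $\int_{g(a)}^{g(b)} f(x)\,\dx = F(g(b)) - F(g(a))$ as well, and comparing the two expressions finishes the argument.

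The one point that needs care — and really the only obstacle — is the orientation of the interval on the left: if $g(b) < g(a)$, the symbol $\int_{g(a)}^{g(b)}$ must be read with the usual convention $\int_\beta^\alpha = -\int_\alpha^\beta$, and Theorem~\ref{t2} should be applied on $[\,\min(g(a),g(b)),\,\max(g(a),g(b))\,]$; the sign then works out so that the value is still $F(g(b)) - F(g(a))$, and if $g(a) = g(b)$ both sides are simply $0$. Everything else is a direct assembly of Theorems~\ref{t1}, \ref{t2} and~\ref{x-u indefinida}, with the continuity of $f$ and the $C^1$ assumption on $g$ serving precisely to make that assembly legal.
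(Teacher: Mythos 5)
Your proof is correct and follows essentially the same route as the paper: fix a primitive $F$ of $f$ via the First Fundamental Theorem, observe that $F\circ g$ is a primitive of the continuous function $(f\circ g)\cdot g'$, and evaluate both integrals as $F(g(b))-F(g(a))$ using the Second Fundamental Theorem (the paper packages the last step as Corollary~\ref{t3}). Your explicit treatment of the orientation convention when $g(b) < g(a)$ is a detail the paper's proof passes over silently, but it does not change the argument.
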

	
	\begin{proof}
		By the First Fundamental Theorem of Calculus \ref{t1}, $P(f) \ne \varnothing$, because $f$ is continuous.
		
		Let $F \in P(f)$. Then $F\circ g \in P((f\circ g)\cdot g')$ and $(f\circ g)\cdot g'$ is continuous. Using the Corollary \eqref{t3}, we have that
		\[\int_{g(a)}^{g(b)} f(x) \dx = F(g(b)) - F(g(a)) = \int_a^b f(g(x))\cdot g'(x)\dx. \qedhere\]
	\end{proof}
	
	We want to remark that we just can apply the Theorem \ref{x-u definida} if $f$ and $g'$ are continuous, because we are using the First and Second Fundamental Theorems of Calculus for the functions $f$ and $(f\circ g)\cdot g'$. If we do not know if $f$ is continuous or $g'$ is continuous, then we have to assume that $P(f) \ne \varnothing$, that $f$ is integrable and that $(f \circ g)\cdot g'$ is also integrable.
	
	Now we can solve the examples of Section \ref{magic} without "mathemagic".
	
	\begin{ex}[$\int\cos \left(x^2\right) x \dx$]
		First of all, lets give names to the functions:
		\[\begin{matrix}
			f : & \R & \longrightarrow & \R \\
			& x & \longmapsto & \cos(x^2)x
		\end{matrix} \qquad \text{and} \qquad
		\begin{matrix}
			g : & \R & \longrightarrow & \R \\
			& x & \longmapsto & x^2
		\end{matrix}\]
	
		We want to calculate $P(f)$, but $f(x) = \cos(x^2)x = \cos(g(x))\cdot \frac{g'(x)}{2}$, that is, $f = \frac{1}{2}\cdot (\cos \circ g)\cdot g'$. Thus
		\begin{align*}
			& P(f) = P\left(\frac{1}{2}\cdot (\cos \circ g)\cdot g'\right) = \frac{1}{2} P\left((\sin \circ g)'\right) = \\
			& = \frac{1}{2} \set{\sin \circ g + c}{c \in \R} = \set{\frac{1}{2} \sin \circ g + c}{c \in \R}.
		\end{align*}
	\end{ex}
	
	\begin{ex}[$\int \frac{1}{\sqrt{x^2 +1}} \dx$]
	
		Let be $f \colon \R \to \R$ given by $f(x) = \frac{1}{\sqrt{x^2+1}}$. In order to calculate $P(f)$, we just need to find $H \in P((f\circ g)\cdot g')$, where $g$ is a convenient diffeomorphism such that $\mathrm{Im}_g = D_f$. Then, by Theorem \ref{x-u inversa}, $P(f) = \set{H \circ g^{-1} + c}{c \in \R}$.
	
		Lets use $g = \left.\tan\right|_I$, where $I = \left]-\frac{\pi}{2}, \frac{\pi}{2} \right[$. We know $g$ is a diffeomorphism..
		
		By another side,
		\begin{align*}
			& \left((f\circ g)\cdot g'\right)(x) = f(\tan(x))\cdot \tan'(x) = \\
			&= \frac{1}{\sqrt{\tan^2(x) + 1}}\cdot \sec^2(x) = \cos(x)\cdot\sec(x) = \sec(x).
		\end{align*}
		Therefore $P((f \circ g)\cdot g') = P(\sec|_I)$, and, if we already know $P\left(\sec|_I\right)$, we can write
		\begin{align*}
			&\varphi \in  P((f\circ g)\cdot g') = P\left(\sec|_I\right) \Leftrightarrow \\
			& \Leftrightarrow \varphi(x) = \ln|\tan x + \sec x| + c, \forall x \in I,
		\end{align*}
		where $c \in \R$ is constant.
		
		Taking $H \colon I \to \R$ given by $H(x) = \ln |\tan(x) + \sec(x)|$, and applying Theorem \ref{x-u inversa}, we have that
		\[P(f) = \set{H \circ \tan^{-1} + c}{c \in \R}.\]
		But,
		\[H\left(\tan^{-1}(x)\right) = \ln\left|x + \sec\left(\tan^{-1}(x)\right)\right| = \ln \left| x + \sqrt{x^2+1}\right|.\]
		Therefore
		\[h \in P(f) \Leftrightarrow h(x) = \ln\left(x+\sqrt{x^2+1}\right) + c, \ \forall x \in \R, \ \text{for some constant} \ c \in \R.\]
	\end{ex}

	\begin{ex}[$\int \sqrt{1-x^2} \dx$]
		Let $f \colon [-1,1] \to \R$ be given by $f(x) = \sqrt{1-x^2}$. The function $f$ is obviously continuous.
		
		The figure of example \ref{ex nao difeo}, in the last section, gives us the idea of using the function $g(\theta) = \sin(\theta)$. So, let $g \colon \left[-\frac{\pi}{2}, \frac{\pi}{2}\right] \to [-1,1]$ given by $g(\theta) = \sin(\theta)$. Thus $g$ is differentiable and bijective, and $g^{-1} \colon [-1,1] \to \left[-\frac{\pi}{2}, \frac{\pi}{2}\right]$ is continuous and differentiable in $]-1,1[$. Therefore we can apply Theorem \ref{x-u not diffeomorfismo} to conclude that $P(f) = \set{H\circ g^{-1} + c}{c \in \R}$, where $H \in P((f\circ g)\cdot g')$.
		
		But,
		\[f(g(\theta))\cdot g'(\theta) = \sqrt{1-\sin^2(\theta)}\cdot \cos(\theta) = \cos^2(\theta), \ \forall \theta \in \left[-\frac{\pi}{2}, \frac{\pi}{2}\right].\]
		
		Thus, $P((f\circ g)\cdot g') = P\left({\cos^2}|_I \right)$, where $I = \left[ -\frac{\pi}{2}, \frac{\pi}{2} \right]$. By another side, we know that the function $H \colon I \to \R$, given by $H(\theta) = \frac{\theta}{2} + \frac{\sin(\theta)\cos(\theta)}{2}$, is a primitive of ${\cos^2}_I = (f\circ g)\cdot g'$. Therefore, $P(f) = \set{H\circ g^{-1} + c}{c \in \R}$.
		
		But
		\begin{multline*}
			H\left(g^{-1}(x) \right) = \frac{\arcsin(x)}{2} + \frac{\sin(\arcsin(x))\cos(\arcsin(x))}{2} = \\
			= \frac{\arcsin(x)}{2} + \frac{x\sqrt{1-x^2}}{2}.
		\end{multline*}
		Thus $P(f) = \set{x\mapsto \frac{\arcsin(x)}{2} + \frac{x\sqrt{1-x^2}}{2} + c}{c \in \R}$.
	\end{ex}

	\begin{ex}[$\int_0^{\frac{3\pi}{4}} e^{\cos(x)}\cdot \sin(x) dx$]
		Lets consider the exponential function $\exp \colon \R \to \R$, given by $\exp(x) = e^x$. We know that the functions $\exp$, $\cos$ and $\cos' = -\sin$ are continuous and that $\cos\left(\left[0,\frac{3\pi}{4}\right]\right) \subset \R = D_{\exp}$, thus, applying the theorem \ref{x-u definida}, we have that
		\begin{align*}
			& \int_0^{\frac{3\pi}{4}} e^{\cos(x)}\cdot \sin(x) \dx = -\int_0^{\frac{3\pi}{4}} e^{\cos(x)}\cdot \cos'(x) \dx = -\int_{\cos(0)}^{\cos\left(\frac{3\pi}{2}\right)} e^x \dx = \\
			& = - \int_1^{-\frac{\sqrt 2}{2}} e^x \dx = \int_{\frac{\sqrt 2}{2}}^1 e^x \dx = e-\frac{1}{e^{\frac{\sqrt 2}{2}}}.
		\end{align*}
	\end{ex}

	
		
		

\subsection{The integral $\int\frac{1}{1-\cos x + \sin x}\dx$}

	When we just say the function $\frac{1}{1-\cos x + \sin x}$, we mean that the function's rule is $x \mapsto \frac{1}{1-\cos x + \sin x}$ and the function's domain is the biggest subset of $\R$ where the rule makes sense. We cannot divide by 0, so the functions domain is the set
	\[\set{x \in \R}{1-\cos x + \sin x \ne 0}.\]
	
	Lets call $f(x) = \frac{1}{1-\cos x + \sin x}$ and calculate $D_f$. We believe that a university level student should take the trouble to calculate the domain of $f$ before anything else. But here again, we have to remark that it would be much easier to just calculate primitives of functions whose domains are intervals.
	
	Lets consider the function $\phi(x) = 1 - \cos x + \sin x$, with $x \in \R$. Then $D_f = \set{x \in \R}{\phi(x) \ne 0}$ and $\phi'(x) = \sin x + \cos x$. Thus
	\begin{enumerate}
		\item $\phi'(x) = 0 \iff x \in \set{\frac{3 \pi}{4} +k\pi}{k \in \Z}$;
		\item $\phi'(x) > 0 \iff x \in \, \bigcup\limits_{k \in \Z}\left]-\frac{\pi}{4} + 2k\pi, \frac{3\pi}{4} + 2k\pi \right[$;
		\item $\phi'(x) < 0 \iff x \in \, \bigcup\limits_{k \in \Z}\left] \frac{3\pi}{4} + 2k\pi, 2\pi - \frac{\pi}{4} + 2k\pi\right[$.
	\end{enumerate}
	
	The following figure shows in the trigonometric circle the points where $\phi'(x)=0$, the points where $\phi'(x) > 0$ and the points where $\phi'(x) < 0$.
	
	\begin{center}
		\psfrag{a}{$\phi'(x)= 0$}
		\psfrag{b}{$\phi'(x)> 0$}
		\psfrag{c}{$\phi'(x)= 0$}
		\psfrag{d}{$\phi'(x)< 0$}
		\includegraphics[scale=0.6]{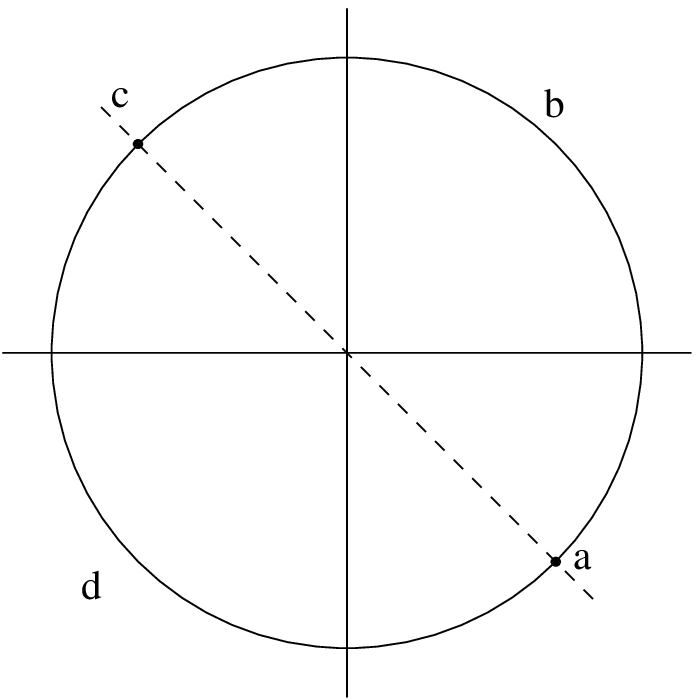}
	\end{center}
	
	This means that $\phi$ is strictly increasing in the intervals $\left[-\frac{\pi}{4} + 2k\pi, \frac{3\pi}{4} + 2k\pi\right]$, for all $k \in \Z$, and $\phi$ is strictly decreasing in the intervals $\left[ \frac{3\pi}{4} + 2k\pi, 2\pi - \frac{\pi}{4} + 2k\pi\right]$, for all $k \in \Z$. Thus, $\phi$ has no more than one zero in each of the following intervals:
	\[\left[ 2k\pi -\tfrac{\pi}{4}, 2k\pi + \tfrac{3\pi}{4} \right] \quad \text{and} \quad \left[ 2k\pi + \tfrac{3\pi}{4}, (2k+2)\pi - \tfrac{\pi}{4} \right], \ \forall k \in \Z.\]
		
	But,
	\begin{enumerate}
		\item $2k\pi \in \left[ 2k\pi - \frac{\pi}{4}, 2k\pi + \frac{3\pi}{4}\right]$,
		\item $2k\pi + \frac{3\pi}{2} \in \left[ \frac{3\pi}{4} + 2k\pi, (2k+2)\pi - \frac{\pi}{4} \right]$ and
		\item $\phi(2k\pi) = \phi\left(2k\pi + \frac{3\pi}{2}\right) = 0$.
	\end{enumerate}
	Therefore,
	\[\set{x\in \R}{\phi(x) = 0} = \set{2k\pi, 2k\pi + \tfrac{3\pi}{2}}{k \in \Z}.\]
	We conclude that
	\begin{multline*}
		D_f = \set{x \in \R}{x \ne 2k\pi \ \text{and} \ x \ne 2k\pi + \tfrac{3\pi}{2}, \ \forall k \in \Z} = \\
		= \bigcup_{k \in \Z} \left( \, \left]2k\pi, 2k\pi + \tfrac{3\pi}{2} \right[ \cup \left] 2k\pi + \tfrac{3\pi}{2}, 2(k+2)\pi \right[ \, \right).
	\end{multline*}
		
	Now, for every $k \in \Z$, let
	\[I_1(k) = \left]2k\pi, 2k\pi+\tfrac{3\pi}{2} \right[ \quad \text{and} \quad I_2(k) = \left] 2k\pi+\tfrac{3\pi}{2}, (2k+2)\pi \right[.\]
	Thus, $D_f = \bigcup\limits_{k \in \Z} \left(I_1(k) \cup I_2(k) \right)$.

	Now we know that $D_f$ is an union of infinite disjoint open intervals. In order to calculate $P(f)$, we will first calculate $P\left(\left.f\right|_I\right)$, where $I$ is an interval.

	Using the identities
	\[\cos x = \frac{1-\tan^2\frac{x}{2}}{1+\tan^2\frac{x}{2}} \quad \text{and} \quad \sin x = \frac{2\tan\frac{x}{2}}{1+\tan^2\frac{x}{2}},\]
	we can write
	\begin{equation}\label{trig}
		\frac{1}{1-\cos x + \sin x} = \frac{1+\tan^2\frac{x}{2}}{2\tan^2\frac{x}{2}+2\tan\frac{x}{2}}
	\end{equation}
	
	Lets remark here that the identities above do not make sense for some points. For example, when $\frac{x}{2} = \frac{\pi}{2} + k\pi$ and $k \in \Z$, the identities do not make sense, because the tangent function is not defined at those points. So, lets suppose that both sides of equation \eqref{trig} make sense for every $x \in I$.
	
	Let $g \colon I \to J$ be given by $g(x) = \tan\frac{x}{2}$, where $J = g(I)$. Then $g'(x) = \frac{\tan'(x)}{2} = \frac{1}{2}\left(1 + \tan^2 \frac{x}{2}\right)$. Then equation \eqref{trig} becomes
	\[\frac{1}{1-\cos x + \sin x} = \frac{2g'(x)}{2\left(g^2(x)+g(x) \right)} = h(g(x))\cdot g'(x),\]
	where $h \colon J \to \R$ is given by $h(x) = \frac{1}{x^2+x}$.

	Using Theorem \ref{x-u indefinida}, $P\left(\left.f\right|_I\right) = P((h\circ g)\cdot g') = \set{H \circ g + c}{c \in \R}$, where $H \in P(h)$. But $h(x) = \frac{1}{x^2+x} = \frac{1}{x} - \frac{1}{x+1}$. If $h_1, h_2 \colon J \to \R$ are given by $h_1(x) = \frac{1}{x}$ and $h_2(x) =\frac{1}{x+1}$, then $h = h_1 - h_2$ and
	\[P(h) = P(h_1-h_2) = P(h_1) - P(h_2).\]
	By another side
	\begin{align*}
		& P(h_1) = \set{\varphi \colon J \to \R}{\varphi(x) = \ln|x|+ c, \ \forall x \in I, \ \text{where $c$ is constant}}; \\
		& P(h_2) = \set{\varphi \colon J \to \R}{\varphi(x) = \ln|x+1|+ c, \ \forall x \in I, \ \text{where $c$ is constant}}.
	\end{align*}
	Therefore the function $H \colon J \to \R$ given by $H(x) = \ln|x|-\ln|x+1| = \ln\left|\frac{x}{x+1}\right|$ is a primitive of $h$ and
	\[P(\left.f\right|_I) = \set{H \circ g + c}{c \in \R}.\]
	
	Until here, we have that $\varphi \in P\left(\left.f\right|_I\right)$ if, and only if, $\varphi \colon I \to \R$ is given by
	\[\varphi(x) = H(g(x)) + c = \ln\left|\frac{\tan\frac{x}{2}}{1+\tan\frac{x}{2}} \right| + c,\]
	where $c$ is constant.

	By the same calculations of the Subsection \ref{last}, we know that
	\[\ln\left| \frac{\tan\frac{x}{2}}{1+\tan\frac{x}{2}}\right| = \ln \left|\frac{\sin x}{1 + \cos x + \sin x}\right|,\]
	every time when both sides of the equality above makes sense.
	
	Lets now consider the domain of the function $x \stackrel{f_2}{\mapsto} \ln \left|\frac{\sin x}{1 + \cos x + \sin x}\right|$, which is the biggest subset of $\R$ in which the rule makes sense. For this, lets consider the auxiliary function $\psi \colon \R \to \R$ given by $\psi(x) = 1 + \cos x + \sin x$. Its clear that if $\psi(x) = 0$, then $x \notin D_\psi$. So lets find the zeros of $\psi$.
	
	We know that $\psi'(x) = \cos x - \sin x$. Thus
	\begin{enumerate}
		\item $\psi'(x) = 0 \iff x \in \set{\frac{\pi}{4} + k\pi}{k \in \Z}$,
		\item $\psi'(x) < 0 \iff x \in \left] 2k\pi + \frac{\pi}{4}, (2k+1)\pi + \frac{\pi}{4}\right[$,
		\item $\psi'(x) > 0 \iff x \in \left] (2k+1)\pi + \frac{\pi}{4}, (2k+2)\pi + \frac{\pi}{4}\right[$.
	\end{enumerate}

	The next figure shows in the trigonometric circle the points where $\psi'(x) =0$, the points where $\psi'(x) >0$ and the points where $\psi'(x)<0$.

	\begin{center}
		\psfrag{a}{$\psi'(x) = 0$}
		\psfrag{b}{$\psi'(x) < 0$}
		\psfrag{c}{$\psi'(x) = 0$}
		\psfrag{d}{$\psi'(x) > 0$}
		\includegraphics[scale=0.6]{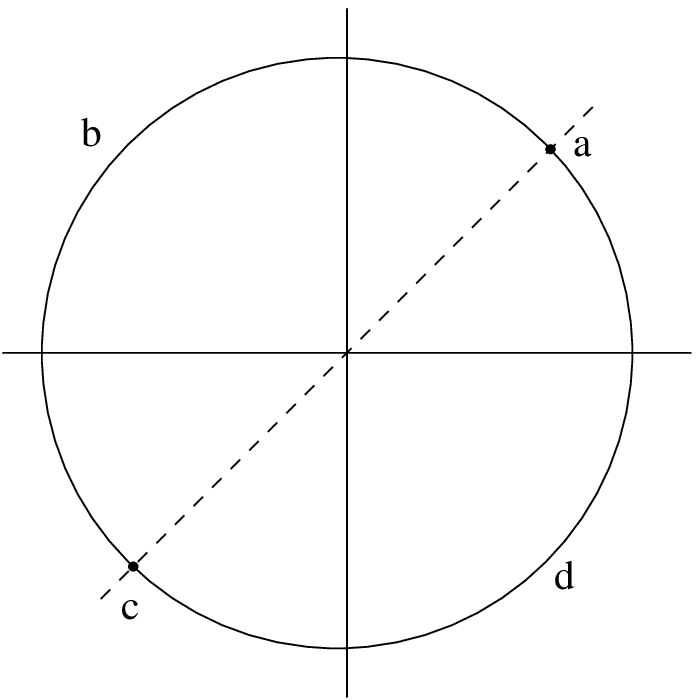}
	\end{center}
	
	This means that $\psi$ is strictly increasing in the intervals
	\[\left[ (2k+1)\pi + \frac{\pi}{4}, (2k+2)\pi + \frac{\pi}{4} \right], \ \forall k \in \Z,\]
	and $\psi$ is strictly decreasing in the intervals
	\[\left[ 2k\pi + \frac{\pi}{4}, (2k+1)\pi + \frac{\pi}{4}\right],  \forall k \in \Z.\]
	Therefore $\psi$ has no more than one zero in each of the following intervals:
	\[\left[ 2k\pi +\tfrac{\pi}{4}, (2k+1)\pi + \tfrac{\pi}{4} \right] \quad \text{and} \quad \left[ (2k+1)\pi + \tfrac{\pi}{4}, (2k+2)\pi + \tfrac{\pi}{4} \right].\]
		
	By another side,
	\begin{enumerate}
		\item $(2k+1)\pi \in \left[ 2k\pi +\frac{\pi}{4}, (2k+1)\pi + \frac{\pi}{4} \right]$,
		\item $2k\pi + \frac{3\pi}{2} \in \left[ (2k+1)\pi + \frac{\pi}{4}, (2k+2)\pi + \frac{\pi}{4} \right]$,
		\item $\psi((2k+1)\pi) = \psi\left(2k\pi + \frac{3\pi}{2}\right) = 0$, and
		\item $\sin(k\pi) = 0$.
	\end{enumerate}
	Therefore, $\set{x\in \R}{\psi(x) = 0} = \set{(2k+1)\pi, 2k\pi + \frac{3\pi}{2}}{k \in \Z}$ and we conclude that
	\[D_{f_2} = \bigcup_{k \in \Z} \left( \left]2k\pi, (2k+1)\pi \right[ \cup \left] (2k+1)\pi, 2k\pi + \tfrac{3\pi}{2} \right[ \cup \left] 2k\pi+\tfrac{3\pi}{2}, (2k+2)\pi\right[ \right).\]
	
	Here we observe that $D_{f_2} \subset D_f$ and that $D_f \setminus D_{f_2} = \set{(2k+1)\pi}{k \in \Z}$.
	
	Lets calculate the following limit:
	\begin{align*}
		& \lim_{x \to (2k+1)\pi} f_2(x) = \lim_{x \to 2k\pi + \pi} \ln \left| \frac{\sin x}{1 + \cos x + \sin x}\right| = \\
		& = \lim_{x \to 2k\pi + \pi} \ln \left| \frac{\cos x}{\cos x - \sin x}\right| = \ln\left|\frac{-1}{-1}\right| = 0.
	\end{align*}

	Lets define $F \colon \bigcup\limits_{k \in \Z} \left( I_1(k) \cup I_2(k)\right)\to \R$ by
	\[F(x) = \begin{cases}
		\ln \left|\frac{\sin x}{1 + \cos x + \sin x}\right|, & \text{if} \ x \ne (2k+1)\pi; \ \\
		0, & \text{if} \ x = (2k+1)\pi;
	\end{cases}\]
	
	Now, $D_F = D_f$ and its possible to prove that $F' = f$, that is $F \in P(f)$. Thus, applying Corollary \ref{prim3}, we have that $\varphi \in P(f)$ if, and only if, for each $k \in \Z$, there exists $c_1(k), c_2(k) \in \R$ such that
	\[\varphi(x) = \begin{cases}
		\ln \left|\frac{\sin x}{1 + \cos x + \sin x}\right| + c_1(k), & \text{if $x \in I_1(k)$ and $x \ne (2k+1)\pi$}; \\
		c_1(k), & \text{if $x = (2k+1)\pi$}; \\
		\ln \left|\frac{\sin x}{1 + \cos x + \sin x}\right| + c_2(k), & \text{if $x \in I_2(k)$}.
	\end{cases}\]

\bibliographystyle{acm}
\addcontentsline{toc}{section}{References}
\bibliography{calculus.bib}
\end{document}